\newtheorem{teo}{Theorem}[section]
\newtheorem{prop}{Proposition}[section]
\newtheorem{corol}{Corollary}[section]
\newtheorem{lemm}{Lemma}[section]
\newtheorem{remark}{Remark}[section]
\newtheorem{ex}{Example}[section]
\newtheorem{definition}{Definition}[section]
\newcommand{\beq}{\begin{equation}}
\newcommand{\eeq}{\end{equation}}
\newcommand{\bqn}{\begin{eqnarray}}
\newcommand{\eqn}{\end{eqnarray}}
\newcommand{\bqne}{\begin{eqnarray*}}
\newcommand{\eqne}{\end{eqnarray*}}
\newcommand{\R}{{\mathbb R}}
\newcommand{\C}{{\mathbb C}}
\title{Locally conformal SKT  almost abelian Lie algebras}
\begin{document}

\author{Louis-Brahim Beaufort}
 \address{ Département de Mathématiques de l'Ecole Normale Supérieure Paris-Saclay, Université Paris-Saclay\\
  4 avenue des Sciences \\
  91190 Gif sur Yvette, France}
\email{louis-brahim.beaufort@ens-paris-saclay.fr}

\author{ Anna Fino}
\address{Dipartimento di Matematica \lq\lq Giuseppe Peano\rq\rq \\ Universit\`a di Torino\\
Via Carlo Alberto 10\\
10123 Torino, Italy\\
$\&$  Department of Mathematics and Statistics\\
Florida International University\\
Miami Florida, 33199, USA}
 \email{annamaria.fino@unito.it, afino@fiu.edu}

\date{\today}
\subjclass[2000]{Primary  53C15; Secondary  53C55 }
\keywords{Hermitian metrics, Locally conformal SKT metrics, almost abelian Lie algebras}

\maketitle

\begin{abstract} 
  A locally conformal SKT  (shortly LCSKT)  structure    is a Hermitian   structure  $(J, g)$ whose   Bismut torsion 3-form $H$ satisfies the condition  $dH = \alpha \wedge H$,  for some closed non-zero 1-form $\alpha$. This condition was introduced in \cite{Ferreira}  as a generalization of the SKT  (or pluriclosed) condition $dH= 0$. In this paper, we  characterize the almost abelian  Lie algebras  admitting a Hermitian structure  $(J, g)$ such that $dH = \alpha \wedge H$,   for some closed 1-form $\alpha$. As an application we classifiy  LCSKT almost abelian Lie algebras in dimension $6$. Finally, we  also study on almost abelian Lie algebras  the compatibility between the LCSKT condition and other types of Hermitian  structures. 
\end{abstract}

\section{Introduction}

Let $(M, J, g)$ be a Hermitian manifold and denote by $\omega$  the \emph{fundamental 2-form}  defined  by 
$\omega(X, Y) = g(JX, Y)$, for any pair of tangent vector fields $X, Y$.  If $(M, J, g)$ is  \emph{K\"ahler},   $J$ and $g$ are both parallel with respect to the Levi-Civita connection $\nabla^{LC}$  of $g$, but  this  is not anymore true in the non-K\"ahler case.

It has been proved by Gauduchon in \cite{gauduchon1997hermitian} that  any  Hermitian manifold  $(M,  J, g)$  admits an affine line of Hermitian connections called \emph{canonical}, preserving both $J$ and $g$ and passing through the  Chern  connection and the Bismut connection. The Bismut connection   $\nabla^B$ can be characterized among these canonical connections as the  only Hermitian  connection with totally skew-symmetric torsion, i.e. such that 
\[ H(X, Y, Z) = g(T^B(X, Y), Z) \]
 is skew-symmetric,  where $T^B(X, Y)$ is the torsion of $\nabla^B$.
In terms of  the fundamental form $\omega$ the $3$-form $H$  has the following expression:
\[H(X, Y, Z) = - d\omega(JX, JY, JZ).\]

A  Hermitian structure $(J, g)$ is called  \emph{Strong Kähler with Torsion} (SKT), or \emph{pluriclosed} if $dH = 0$.  The notion of SKT structure appeared first in theoretical physics \cite{gates1984twisted, howe1988further, strominger1986superstrings}, and is deeply related to generalized Kähler geometry \cite{gates1984twisted, fino2021generalized, gualtieri2014generalized}.  By \cite{Gauduchon84}  every  compact complex surface admits a SKT metric, but  in higher dimension no general conditions are known  for  the existence  of SKT metrics.  Examples of  compact SKT manifolds can be constructed  considering compact quotients  of nilpotent  or solvable Lie groups by lattices (see for instance \cite{MadsenSwann, FPS,  EFV, Arroyo_2019, FOU, fino2021generalized,FP1, FrSw1, FrSw2, AN}). 

A  generalization of  the SKT condition  has been recently  introduced in \cite{Ferreira}:  a Hermitian structure $(J, g)$ is \emph{locally conformally SKT} (LCSKT) if there exists a  closed non-zero   1-form $\alpha$ such that
$dH = \alpha \wedge H.$
Note that   a SKT  structure  is not LCSKT unless there exists a   closed non-zero $1$-form $\alpha$ such that $\alpha \wedge H =0.$ 
In order to include  SKT and LCSKT  structures  as special cases,  we introduce the following
\begin{definition} A Hermitian structure  $(J, g)$   is called {\it twisted SKT}   if    its Bismut torsion 3-form $H$  satisfies $d H = \alpha \wedge H$, for some closed $1$-form $\alpha$.
\end{definition}

We recall that the {\em Lee form}    of a Hermitian  structure $(J, g)$   is the $1$-form $\theta$  defined by $J \star d \star \omega = J d^* \omega$.  A Hermitian metric $g$ is called  \emph{balanced} if $\theta = 0$, and locally conformally balanced (LCB) if $d\theta = 0$.  By \cite{Alexandrov, Popovici}  a SKT structure which is also balanced   has to K\"ahler but  no general result  is known for the LCSKT case.

A classification  of  $6$-dimensional LCSKT  nilpotent Lie algebras   was given in  \cite{Ferreira}, showing that  in contrast to the SKT case there exists a $3$-step nilpotent Lie algebra admitting a LCSKT structure,   but no general classification result  is known for solvable Lie algebras admitting a LCSKT structure. 
In  this paper we  consider  the case of   {\it almost abelian} Lie algebras $\frak g$,  which are defined as   non-abelian  Lie algebras $\mathfrak{g}$ with a codimension  one  abelian ideal $\mathfrak{n}$. In particular,  we study  the existence of  twisted SKT structures,  extending   the results  in  \cite{Arroyo_2019, fino2021generalized, Ferreira}. More results about special  Hermitian metrics on almost abelian Lie algebras have been  recently obtained  in \cite{Yuqin}.

In Section 3 we  obtain  a characterization of  almost abelian Lie algebras admitting a twisted SKT structure  $(J, g)$, showing also a  description of  all the $J$-Hermitian metrics $g$ and   $1$-forms $\alpha$. In   Section 4, we apply the previous characterization  to   classify  $6$-dimensional LCSKT almost abelian Lie algebras.
Finally, in Section 5, we discuss how the LCSKT condition interacts with balanced, locally conformally balanced (LCB) or Bismut-Ricci flat metrics.  In particular, we show that if a balanced and a twisted SKT metric coexist on an almost abelian Lie algebra, then both are deformations of a K\"ahler metric, but that  this is not anymore  true if we replace the balanced condition by the  LCB one.

\subsection*{Acknowledgments}

This paper is the result of a 3 month Master 1 internship of the first author at the University of Torino under the supervision of the second author. The first author thanks her for her hospitality and her guidance.
The second author  is  partially supported by    Project PRIN 2017 \lq \lq Real and complex manifolds: Topology, Geometry and Holomorphic Dynamics”,  by  GNSAGA (Indam) and by a grant from the Simons Foundation (\#944448).
The authors would like to thank  Fabio Paradiso  for  his  helpful comments.

\section{LCSKT structures on almost abelian Lie algebras}  \label{sect3}

We recall that an almost  complex  structure  $J$ on  a Lie algebra  $\mathfrak{g}$ is  given by an endomorphism $J$ of $\mathfrak{g}$ such that $J^2 =  - \mbox{id}$ and  $J$ is integrable if and only if \[N(X, Y) = [JX, JY] - [X, Y] - J[JX, Y] - J[X, JY] = 0.\] A $J$-Hermitian metric  $g$   on the Lie algebra $\frak g$  is a inner product which is  compatible with $J$ and the pair $(J,g)$ is  also called an Hermitian structure on $\frak g$.

Let  $H$ be  the torsion $3$-form of the Bismut connection associated to $(J, g)$.  A   Hermitian metric   $g$  is called {\it twisted SKT}  if $dH = \alpha \wedge H$,   for some  closed 1-form $\alpha \in \mathfrak{g}^*$. In particular,  if  $\alpha =0$, 
   $g$    is   a SKT   (or pluriclosed)   metric and  if $\alpha$ is non-zero,   the metric $g$ is  LCSKT. 
  
Suppose now  that $\mathfrak{g}$  is an  almost abelian Lie algebra of dimension $2n$ and   denote by   $\mathfrak{n}$  be an abelian ideal of $\mathfrak{g}$ of codimension $1$.  Using the description  of Hermitian almost abelian Lie algebras  found in \cite{Arroyo_2019},   we can  extend  the results in \cite{Ferreira} and show  a characterization of  almost abelian Lie algebras  admitting a twisted SKT structure.
More precisely, given  an almost Hermitian structure $(J, g)$ on $\frak g$,   denote by  $\mathfrak{n}_1 = \mathfrak{n} \cap J\mathfrak{n}$ the maximal $J$-invariant subset of $\mathfrak{n}$ and by $\frak k$  the 1-dimensional orthogonal complement of $\frak n$ in $\frak g$ with respect to $g$. One can always find  an  orthonormal basis $(e_1, \ldots , e_{2n})$ of $\frak g$ adapted  to the splitting $\frak g = J \frak k  \oplus \frak n_1  \oplus \frak k$, i.e.  such that
$\frak k = {\text{span}} \langle e_{2n} \rangle,  \frak n_1 = {\mbox{span}} \langle e_2, \ldots ,e_{2n-1} \rangle$ and  $J e_1 = e_{2n}, J e_2 = e_3, \ldots, J e_{2n-2} = e_{2n -1}$. With respect to  the previous  orthonormal adapted  basis, the $(2n - 1) \times (2n - 1)$ matrix $B$ associated with  the endomorphism  $ad_{e_{2n}} \vert_{\frak n}$  is of the form
\begin{equation}\label{ExpressB}
B = 
        \begin{pmatrix}
            a & w^t \\
            v & A
        \end{pmatrix}, 
\end{equation}
 with $a \in \R,  v, w \in \mathfrak{n}_1,  A \in \mathfrak{gl} (n_1).$  The  Lie algebra structure of $\frak g$  is determined by the endomorphism $ad_{e_{2n}} \vert_{\frak n}$ and $\mathfrak g$   can be written as  the semidirect product $\R \ltimes_{B } \frak n$.
Therefore, the almost Hermitian structure $(J, g)$ is  fully  determined by the algebraic data $(a, v, w, A).$  Moreover, 
 by  \cite{Lauret} $(J, g)$  is Hermitian if and only if $w = 0$ and $[A, J_1] = 0$, where  by $J_1$ we  denote the restriction of $J$ to $\frak n_1$.  
 
In the sequel we will denote  by  $ \frak g (a,v,A)$ the  almost abelian Lie algebra  determined  by the algebraic data $(a, v, 0, A)$  and 
  by  $S(M)$ the symmetric part of  a matrix $M$.

In the next remark we will show that, 
given a complex structure $J$ on $\frak g$,  
we can  associate  to $(\frak g, J)$    algebraic data  $(a, v, A)$ such that the endomorphism of $\mathfrak{n}_1$ represented by $A$ only depends on  $J$, up to a non-zero scalar.

   \begin{remark} \label{remdaptedbasis}
 We can show the existence of a $J$-adapted  basis $(e_1, \ldots, e_{2n})$ in the sense  that $\frak n_1 = {\mbox{span}} \langle e_2, \ldots ,e_{2n-1} \rangle$, $\frak n= {\text{span}} \langle e_1, \ldots, e_{2n-1} \rangle$ and    $J e_1 = e_{2n}, J e_2 = e_3, \ldots, J e_{2n-2} = e_{2n -1}$. 
  To construct  such a   basis, one can choose   any  non-zero element $y \in \mathfrak{g}$ such that $y \notin \mathfrak{n}$. Since 
\[ \mbox{span} \langle y, Jy \rangle \cap \mathfrak{n} \neq  \{ 0 \} \]
there exists a nonzero vector  $e_1 \in  \mbox{span}  \langle y, Jy \rangle \cap \mathfrak{n}$.  Let $e_{2n} = J e_1$, then $e_{2n} \notin \mathfrak{n}$, since   otherwise  $\langle e_1, e_{2n} \rangle$ is contained in $\mathfrak{n}_1$, and then $y \in \mathfrak{n}_1$ which is a contradiction.
We  can then complete  $(e_1, e_{2n})$  to a basis of $\frak g$ choosing  a  basis $(e_2, \ldots, e_{2n -1})$  of $\mathfrak{n}_1$ such that $J e_2 = e_3, \ldots, J e_{2n-2} = e_{2n -1}$. 
  
So denoting $\mathfrak{c} = {\mbox{span}} \langle e_{2n} \rangle$ we still have the decomposition $\frak g = J \frak c  \oplus \frak n_1  \oplus \frak c$ and with respect to  the previous $J$-adapted  basis $(e_i)$, the $(2n - 1) \times (2n - 1)$ matrix $B$ associated with  the endomorphism  $ad_{e_{2n}} \vert_{\frak n}$  is of the form \eqref{ExpressB}. Note that,  if we consider another $J$-adapted basis, as $\mathfrak{n}$ is abelian, the endomorphisms $ad_{e_{2n}}\vert_{\mathfrak{n}}$ and $ad_{e_{2n}}\vert_{\mathfrak{n}_1}$ will only change by a non-zero scalar.   So the endomorphisms represented by $A$ and $B$ are actually (up to a non-zero scalar) completely determined by the  almost complex structure $J$.

 One can  now show  that $J$ is integrable if and only if  $B(\mathfrak{n}_1) \subset \mathfrak{n}_1$ and $A = B\vert_{\mathfrak{n}_1}$ commutes with $J_1 = J\vert_{\mathfrak{n}_1}$.
Indeed, if $J$ is integrable, we have 
$$
    [Je_{2n}, Jx] = [e_{2n}, x] + J[Je_{2n}, x] + J[e_{2n}, Jx], \quad \forall x \in \mathfrak{n}_1,
    $$
    and so
    $$
    [e_1,  Jx] = Bx + J [e_1, x] + JB(Jx).
    $$
   Since  $\frak n$ is abelian and $\mathfrak{n}_1$  is J-invariant, it follows that  $[e_1,  Jx] =  [e_1,  Jx] = 0$ and so  $JBJx = -Bx$.
As a  consequence  $B\mathfrak{n}_1$ is $J$-invariant and  contained in $\mathfrak{n}$, so $B\mathfrak{n}_1 \subset \mathfrak{n}_1$ and $[A, J_1]=0$. In particular, writing $B$  with respect to  a $J$-adapted basis
\begin{equation*}
B = 
        \begin{pmatrix}
            a & w^t \\
            v & A
        \end{pmatrix}, 
\end{equation*}we see that $w = 0$. Conversely, if $B\mathfrak{n}_1 \subset \mathfrak{n}_1$ and $A$ and $J_1$ commute, one  can  check  by a direct computation that  the Nijenhuis tensor vanishes.

Therefore, summarizing the previous discussion, given any $J$-adapted basis $(e_i)$ of an almost abelian Lie algebra $\frak g$ endowed with a complex structure $J$,  we have that $\mathfrak{g} = \frak g (a, v, A)$  and so the Lie algebra  is completely determined by the data  $(a, v, A)$.

Moreover,  given a $J$-Hermitian  inner product $g$ on $\frak g$,  we  can  find an \emph{orthonormal} $J$-adapted basis $(e_i)$  of $\mathfrak{g}$,  choosing  as  $e_{2n}$ a generator of the orthogonal complement $\mathfrak{n}^{\perp_{g}}$ of $\frak n$.
\end{remark}

In \cite[Lemma 1]{Arroyo_2019} it was proved that a Hermitian almost abelian Lie algebra $(\frak g (a,v,A), J, g)$ is SKT  (i.e. $d H =0$) if and only  if $$a A + A^2 + A^T  A \in \frak{so}({\frak n}_1).$$ Moreover, 
$(\frak g (a,v,A), J, g)$ is K\"ahler  (i.e. $H =0$) if and only if $v = 0$ and $A$ is antisymmetric
(see for instance \cite[Lemma 3.6]{fino2021generalized}).

The following result, valid for LCSKT structures, is actually valid for every twisted SKT structure $(J, g)$:
 
 \begin{prop}[{\cite[Proposition 4.1]{Ferreira}}] \label{lem:relLCSKT} A Hermitian  almost abelian Lie algebra  $(\frak g (a,v,A), J,$ $ g)$ is LCSKT  if and only if, for some nonzero closed 1-form $\alpha$,  the following conditions hold:
\begin{align}
    &\alpha(a e_1 + v) = 0, \label{eq:11}\\
    &\alpha \circ A = 0, \label{eq:12}\\
    &\alpha(X)g(S(A)J_1Y, Z) - \alpha(Y)g(S(A)J_1X, Z) + \alpha(Z)g(S(A)J_1Y, X) = 0,   \label{eq:13}\\
    &g(S((a + \alpha(e_{2n}))A + A^2 + A^T A)J_1Y, Z) = \frac{1}{2} \left ( g(v, Y) \alpha(Z) - g(v, Z) \alpha(Y)\right ), \label{eq:14}
\end{align}
for every $X, Y, Z \in \mathfrak{n}.$
\end{prop}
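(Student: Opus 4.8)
The plan is to write every object explicitly in an orthonormal $J$-adapted basis $(e_{1},\dots,e_{2n})$ as in Remark~\ref{remdaptedbasis} and then to compare $dH$ with $\alpha\wedge H$ component by component. The basic tool is the exterior derivative on $\mathfrak g^{*}$: since $\mathfrak n$ is abelian and $[e_{2n},x]=Bx$ for $x\in\mathfrak n$, one has $de^{2n}=0$ and, for every form $\mu$ with $\iota_{e_{2n}}\mu=0$, $d\mu=-e^{2n}\wedge L_{B}\mu$, where $L_{B}$ is the derivation of $\Lambda^{\bullet}\mathfrak n^{*}$ induced by $B$ (so $(L_{B}\beta)(x)=\beta(Bx)$ on $1$-forms, $(L_{B}\eta)(x,y)=\eta(Bx,y)+\eta(x,By)$ on $2$-forms, and so on). Applied to $\alpha$, the condition $d\alpha=0$ becomes $\alpha\circ B=0$ on $\mathfrak n$; decomposing $\mathfrak n=\mathbb R e_{1}\oplus\mathfrak n_{1}$ and using $Be_{1}=ae_{1}+v$ and $B|_{\mathfrak n_{1}}=A$ (recall $w=0$ because $(J,g)$ is Hermitian), this is exactly \eqref{eq:11}--\eqref{eq:12}. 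So it remains to prove that, for such a closed $\alpha$, the equation $dH=\alpha\wedge H$ is equivalent to \eqref{eq:13}--\eqref{eq:14}.

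Next I would compute the Bismut torsion. Writing $\omega=e^{1}\wedge e^{2n}+\omega_{1}$ with $\omega_{1}=\sum_{k=1}^{n-1}e^{2k}\wedge e^{2k+1}$ the fundamental form of $\mathfrak n_{1}$, one gets $de^{1}=a\,e^{1}\wedge e^{2n}$, hence $d\omega=d\omega_{1}=-e^{2n}\wedge\sigma$ with $\sigma=L_{B}\omega_{1}$. Since $e^{2n}\circ J=e^{1}$, the identity $H=-d\omega(J\cdot,J\cdot,J\cdot)$ gives $H=e^{1}\wedge\widetilde\sigma$, where $\widetilde\sigma(Y,Z)=\sigma(JY,JZ)$. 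One then evaluates $\widetilde\sigma$ using $Je_{1}=e_{2n}$, $J_{1}=J|_{\mathfrak n_{1}}$, the $J_{1}$-invariance of $\omega_{1}$, and $[A,J_{1}]=0$: its contraction with $e_{2n}$ is $-g(v,\cdot)$ on $\mathfrak n_{1}$, and its restriction to $\mathfrak n_{1}$ equals $2\,g(S(A)J_{1}\cdot,\cdot)$. In other words $H=-e^{1}\wedge e^{2n}\wedge\nu+e^{1}\wedge\rho$, where $\nu=g(v,\cdot)|_{\mathfrak n_{1}}$ and $\rho(Y,Z)=2\,g(S(A)J_{1}Y,Z)$ for $Y,Z\in\mathfrak n_{1}$. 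As a check, for $v=0$ with $A$ antisymmetric this gives $H=0$, consistently with the K\"ahler characterization recalled above; note also that $H$ is always divisible by $e^{1}$.

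The last step is to differentiate this expression and to expand $\alpha\wedge H$. Using $de^{1}=a\,e^{1}\wedge e^{2n}$, the formula for $d$ above, and the algebraic identity $S(A)A+A^{T}S(A)=S(A^{2}+A^{T}A)$, one finds that the first summand of $H$ is closed and that $dH=e^{1}\wedge e^{2n}\wedge\Psi$ with $\Psi(Y,Z)=2\,g\big(S(aA+A^{2}+A^{T}A)J_{1}Y,Z\big)$ for $Y,Z\in\mathfrak n_{1}$. On the other hand, writing $\alpha=\alpha(e_{1})\,e^{1}+\alpha(e_{2n})\,e^{2n}+\alpha_{0}$ with $\alpha_{0}\in\mathfrak n_{1}^{*}$, and using that $H$ is divisible by $e^{1}$ (so that $\alpha(e_{1})$ does not enter), one gets $\alpha\wedge H=-e^{1}\wedge(\alpha_{0}\wedge\rho)-e^{1}\wedge e^{2n}\wedge\big(\alpha(e_{2n})\rho+\alpha_{0}\wedge\nu\big)$. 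Equating $dH$ with $\alpha\wedge H$ in the two independent components then gives: in $e^{1}\wedge\Lambda^{3}\mathfrak n_{1}^{*}$, the relation $\alpha_{0}\wedge\rho=0$, which written out is \eqref{eq:13}; and in $e^{1}\wedge e^{2n}\wedge\Lambda^{2}\mathfrak n_{1}^{*}$, the relation $\Psi=-\alpha(e_{2n})\rho-\alpha_{0}\wedge\nu$, that is, $g\big(S((a+\alpha(e_{2n}))A+A^{2}+A^{T}A)J_{1}Y,Z\big)=\tfrac12\big(g(v,Y)\alpha(Z)-g(v,Z)\alpha(Y)\big)$, which is \eqref{eq:14}. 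Reversing the computation gives the converse implication.

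I expect the main obstacle to be bookkeeping rather than any conceptual difficulty: carrying out the $J$-twist that produces $\widetilde\sigma$ — tracking how $J_{1}$, $A$, $A^{T}$ and $\omega_{1}$ interact under $[A,J_{1}]=0$ — and then controlling all the wedge signs when splitting $dH$ and $\alpha\wedge H$ into their $e^{1}$-, $e^{2n}$- and $\mathfrak n_{1}$-parts. One detail to watch is the order of the arguments in the terms $g(S(A)J_{1}\cdot,\cdot)$ occurring in \eqref{eq:13}, which must be reconciled with the antisymmetry $g(S(A)J_{1}Y,Z)=-g(S(A)J_{1}Z,Y)$.
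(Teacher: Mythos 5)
The paper itself offers no proof of this proposition (it is quoted from \cite{Ferreira}), so your blind computation is in effect reconstructing the proof of the cited source, and in substance it is the right one. I checked the steps: $d\alpha=0$ unwinds exactly to \eqref{eq:11}--\eqref{eq:12}; with your conventions one indeed gets $H=-e^{1}\wedge e^{2n}\wedge\nu+e^{1}\wedge\rho$ with $\nu=g(v,\cdot)|_{\mathfrak{n}_1}$ and $\rho=2g(S(A)J_{1}\cdot,\cdot)$, the first summand is closed, $dH=e^{1}\wedge e^{2n}\wedge\Psi$ with $\Psi=2g\bigl(S(aA+A^{2}+A^{T}A)J_{1}\cdot,\cdot\bigr)$ via $S(A)A+A^{T}S(A)=S(A^{2}+A^{T}A)$, and the $e^{1}\wedge e^{2n}\wedge\Lambda^{2}\mathfrak{n}_{1}^{*}$-component of $dH=\alpha\wedge H$ is precisely \eqref{eq:14}, including the factor $\tfrac12$ and the signs.

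The one point you cannot dismiss as bookkeeping is the identification of the remaining component with \eqref{eq:13} as printed. Expanding $\alpha_{0}\wedge\rho=0$ gives $\alpha(X)g(S(A)J_{1}Y,Z)-\alpha(Y)g(S(A)J_{1}X,Z)+\alpha(Z)g(S(A)J_{1}X,Y)=0$, whose last term is the \emph{negative} of the last term of \eqref{eq:13}; the two trilinear identities are genuinely different, not related by the antisymmetry of $g(S(A)J_{1}\cdot,\cdot)$ as you suggest. Indeed, adding \eqref{eq:13} to itself with $Y$ and $Z$ swapped yields $\alpha(Y)g(S(A)J_{1}X,Z)+\alpha(Z)g(S(A)J_{1}X,Y)=0$ for all arguments, which forces either $\alpha|_{\mathfrak{n}_1}=0$ or $S(A)=0$; by contrast $\alpha_{0}\wedge\rho=0$ also allows, for instance, $S(A)$ of rank two with $J_{1}$-invariant image and $\alpha_{0}$ supported on that image. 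So to prove the proposition with \eqref{eq:13} read literally you must either treat the order of the last two arguments as a misprint for $g(S(A)J_{1}X,Y)$, or add a supplementary argument showing that, in the presence of \eqref{eq:11}, \eqref{eq:12} and \eqref{eq:14} (which give $S((a+\alpha(e_{2n}))A+A^{2}+A^{T}A)=0$ and hence $\alpha|_{\mathfrak n}\propto v^{\flat}$ when $v\neq0$), the condition $\alpha_{0}\wedge\rho=0$ does imply the dichotomy above, so that the printed form of \eqref{eq:13} is recovered. The same care is needed with the quantifier ``for every $X,Y,Z\in\mathfrak{n}$'': your derivation produces the conditions on $\mathfrak{n}_{1}$, and extending them to $\mathfrak{n}$ requires fixing a convention for $S(A)J_{1}$ on $e_{1}$ and, again, the extra information above. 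Apart from this reconciliation step, the proposal is correct.
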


The previous  result can be extended   in the following 

\begin{prop}  \label{pps:antisym} If a Hermitian  almost abelian Lie algebra  $(\frak g (a,v,A), J, g)$  is twisted SKT, for some closed $1$-form $\alpha$, then
\begin{equation} \label{cond-pps:antisym}
(a + \alpha(e_{2n}))A + A^2 + A^T A \in \mathfrak{so(n_1)}.
\end{equation}
\end{prop}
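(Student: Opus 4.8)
The goal is to show that the symmetric part $S(M)$ of $M := (a+\alpha(e_{2n}))A + A^2 + A^T A$ vanishes, working from conditions \eqref{eq:13} and \eqref{eq:14} of Proposition \ref{lem:relLCSKT}, which by the remark preceding Proposition \ref{pps:antisym} are available for any twisted SKT structure and closed $\alpha$. First I would record a structural fact: since $A$ commutes with $J_1$ and $J_1$ is $g$-orthogonal with $J_1^2=-\mathrm{id}$, transposing $AJ_1=J_1A$ gives $A^TJ_1=J_1A^T$; hence $A$, $A^T$, and therefore $M$, $M^T$ and $S(M)$, all commute with $J_1$. Because $S(M)$ is symmetric, $J_1$ skew-symmetric, and the two commute, the endomorphism $S(M)J_1$ of $\mathfrak{n}_1$ is skew-symmetric with respect to $g|_{\mathfrak{n}_1}$; the same argument shows $S(A)J_1$ is skew-symmetric.

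Next I would extract a dichotomy from \eqref{eq:13}. Taking $X,Y,Z\in\mathfrak{n}_1$ and setting $Z=X$, the first and third terms coincide while the middle one vanishes since $g(S(A)J_1X,X)=0$ by skew-symmetry, so $\alpha(X)\,g(S(A)J_1Y,X)=0$ for all $X,Y\in\mathfrak{n}_1$; using skew-symmetry of $S(A)J_1$ once more this reads $\alpha(X)\,S(A)J_1X=0$ for every $X\in\mathfrak{n}_1$, i.e. $\mathfrak{n}_1=\ker(\alpha|_{\mathfrak{n}_1})\cup\ker(S(A)J_1)$. Since a vector space is never the union of two proper subspaces, either $\alpha|_{\mathfrak{n}_1}=0$ or $S(A)J_1=0$, the latter being equivalent to $S(A)=0$ as $J_1$ is invertible.

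It remains to close both cases. If $S(A)=0$, then $A^T=-A$, hence $A^TA=-A^2$ and $M=(a+\alpha(e_{2n}))A+A^2+A^TA=(a+\alpha(e_{2n}))A$, which is skew-symmetric, so \eqref{cond-pps:antisym} holds. If instead $\alpha|_{\mathfrak{n}_1}=0$, then the right-hand side of \eqref{eq:14} vanishes for all $Y,Z\in\mathfrak{n}_1$, giving $g(S(M)J_1Y,Z)=0$ there; since $S(M)$ and $J_1$ preserve $\mathfrak{n}_1$, $J_1$ is invertible and $g|_{\mathfrak{n}_1}$ is nondegenerate, this forces $S(M)=0$, so again \eqref{cond-pps:antisym} holds. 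In particular, for $\alpha=0$ one recovers the SKT condition $aA+A^2+A^TA\in\mathfrak{so}(\mathfrak{n}_1)$ of \cite{Arroyo_2019}.

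I do not expect a serious obstacle here. The only points needing care are bookkeeping: restricting the vectors in \eqref{eq:13}–\eqref{eq:14} to $\mathfrak{n}_1$, where $J_1$ is genuinely defined and invertible, and invoking the elementary fact that a vector space is not a union of two proper subspaces to obtain the case split. The one substantive observation is that in the branch $S(A)=0$ the quadratic terms $A^2+A^TA$ cancel, which is exactly what makes the surviving matrix $(a+\alpha(e_{2n}))A$ automatically antisymmetric.
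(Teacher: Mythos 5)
Your proof is correct, and it takes a genuinely different and leaner route than the paper's. The paper proves $C:=S\bigl((a+\alpha(e_{2n}))A+A^2+A^TA\bigr)=0$ by splitting on $v=0$ versus $v\neq 0$: in the second case it solves \eqref{eq:14} for $\alpha$ in terms of $C$ and $v$, reduces the problem to showing $g(Cv,v)=0$, and then uses \eqref{eq:12} and \eqref{eq:13} to establish $v\perp\mathrm{Im}(A)$ and $v\perp\mathrm{Im}(A^T)$, whence $Cv=0$ and $C=0$. You instead ignore $v$ entirely and extract from \eqref{eq:13} alone, restricted to $\mathfrak{n}_1$ and evaluated at $Z=X$, the pointwise alternative $\alpha(X)\,S(A)J_1X=0$; the union-of-two-proper-subspaces fact then gives the global dichotomy $\alpha|_{\mathfrak{n}_1}=0$ or $S(A)=0$, and each branch closes in one line (in the first, the right-hand side of \eqref{eq:14} vanishes on $\mathfrak{n}_1$ and nondegeneracy of $g|_{\mathfrak{n}_1}$ plus invertibility of $J_1$ kill $S(M)$; in the second, $A^TA=-A^2$ cancels the quadratic terms and $(a+\alpha(e_{2n}))A$ is automatically skew). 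The key technical points — that $[A,J_1]=0$ implies $[A^T,J_1]=0$ and hence that $S(A)J_1$ and $S(M)J_1$ are $g$-skew, and that the identities of Proposition \ref{lem:relLCSKT} may be restricted to $\mathfrak{n}_1$ — are all handled correctly. Interestingly, your dichotomy is essentially the one the paper only establishes later, inside the converse direction of Theorem \ref{thm:caractLCSKT}, where it is derived using a unitary eigenbasis and the (there already available) normality of $A$; you obtain it without any normality hypothesis, which is what makes the forward implication short. What the paper's longer route buys is some auxiliary information in the $v\neq 0$ case ($v\in\ker A\cap\ker A^T$ and an explicit formula for $\alpha$ in terms of $v$ and $C$), but none of that is needed for the statement itself, so your argument is a clean simplification.
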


\begin{proof}
By the proof of Proposition   \ref{lem:relLCSKT}   we know that, for some closed 1-form  $\alpha$ the conditions   \eqref{eq:11},  \eqref{eq:12}, \eqref{eq:13} and  \eqref{eq:14}
are  satisfied.   Note that the condition  \eqref{cond-pps:antisym} is equivalent to the vanishing of the matrix   $C := S((a + \alpha(e_{2n}))A + A^2 + A^T A)$.  To  prove  $C =0$ we  will study   separately the two cases: $v =0$ and $v \neq 0$.

 If $v = 0$, the result  follows   from \eqref{eq:14}.  If $v \neq 0$, we  will  prove  that $C=0$ as a consequence of the following properties:
\begin{align} 
&C = 0 \Longleftrightarrow g(Cv, v) = 0, \label{lem:Czero} \\
 & v \perp {\rm Im}  (A),  \label{lem:vperpImA}\\
 &  v \perp  {\rm {Im}} (A^T).\label{lem:vperpAT}
 \end{align}

Let's first prove    \eqref{lem:Czero}. Using   \eqref{eq:14} we get
\begin{equation}\label{new(4)}
 g(CJ_1Y, Z) = \frac{1}{2} \left ( g(v, Y) \alpha(Z) - g(v, Z) \alpha(Y)\right ), \quad \forall Y, Z \in \frak n.
\end{equation}
For $Y=v,$ we obtain 
  $$
 g(CJ_1v, Z) = \frac{1}{2} \left ( \Vert v \Vert^2 \alpha(Z) - g(v, Z) \alpha(v)\right ),  \quad \forall Z \in \frak n,
 $$
which gives
$$
   \alpha(Z) = \frac{1}{\Vert v \Vert^2} g(2CJ_1 v + \alpha(v)v, Z), \quad  \forall Z \in \frak n.
   $$
 If we   substitute the  previous expression of $\alpha (Z)$  in \eqref{new(4)} we have:
 $$
 \begin{array}{cll}
            g(CJ_1Y, Z) &=& \frac{1}{2} \left ( g \left ( \frac{v}{\Vert v \Vert^2}, Y \right ) g \left (2CJ_1 v + \alpha(v)v, Z \right) - g \left (\frac{v}{\Vert v \Vert^2}, Z \right ) g(2CJ_1 v + \alpha(v)v, Y)\right ) \\
            &= &g \left (\frac{v}{\Vert v \Vert^2}, Y \right ) g(CJ_1 v, Z) - g \left (\frac{v}{\Vert v \Vert^2}, Z\right ) g(CJ_1 v, Y),
\end{array}
$$
for every $Y, Z \in \frak n$.
Let $\pi(Z) := g(\frac{v}{\Vert v \Vert^2}, Z)$. Since $CJ_1$ is also  antisymmetric, 
we get
        \begin{align*}
            g(CJ_1Y, Z) &= \pi(Y) g(CJ_1 v, Z) - \pi(Z) g(CJ_1v, Y) \\
            &= \Vert v \Vert^2 \left (- \pi(Y) \pi(CJ_1 Z) + \pi(Z) \pi(CJ_1 Y) \right ),  \quad \forall Y, Z \in \frak n.
        \end{align*}
        Thus,
        \[g(CY, Z) = \Vert v \Vert^2 \left (\pi(CY) \pi(Z) + \pi(J_1Y) \pi(CJ_1Z) \right ), \quad \forall Y, Z \in \frak n.\]
        Now, since  $g(CJ_1v, v) = 0$,  we obtain $$g(v, CZ) = g(Cv, Z) = g(Cv, v) \pi(Z),  \quad \forall Z \in \frak n$$ and
        \[g(CY, Z) = g(Cv, v) \left (\pi(Y) \pi(Z) + \pi(J_1Y) \pi(J_1Z) \right ).\]
        In particular,    \eqref{lem:Czero} holds.
        
    Using the previous results, we  will now   prove   \eqref{lem:vperpImA}.
  By
        \begin{align*}
            \alpha(Z) &= \frac{1}{\Vert v \Vert^2} g(2CJ_1 v + \alpha(v)v, Z)  = \alpha(v) \pi(Z) - 2 \pi(CJ_1Z) \\
                       &= \alpha(v) \pi(Z) - 2 \pi(Cv) \pi(J_1Z) = \frac{1}{\Vert v \Vert^2} g(\alpha(v)v + 2\pi(Cv)J_1v, Z) 
                               \end{align*}
       it follows that  \eqref{eq:12} is equivalent to $\alpha(v)v + 2 \pi(Cv)J_1v \perp {\rm {Im}} (A)$. But then since $A$ and $J_1$ commute and $J$ is an isometry, we have that $\alpha(v)J_1v - 2 \pi(Cv)v$ is orthogonal to   ${\rm {Im}}  (J_1A )=  {\rm {Im}}  (AJ_1) =  {\rm {Im}}  (A)$. As a consequence both $v$ and $Jv$ are perpendicular to  ${\rm {Im}}  (A)$, or equivalently  $\alpha(v) = \pi(Cv) = 0$.  Then  $\pi(Cv) = 0 \Leftrightarrow C = 0$ and  \eqref{lem:vperpImA}    follows.

To prove \eqref{lem:vperpAT},  we can use 
 the condition \eqref{eq:13}:
        \[\alpha(X)g(S(A)J_1Y, Z) - \alpha(Y)g(S(A)J_1X, Z) + \alpha(Z)g(S(A)J_1Y, X) = 0, \quad \forall Y, Z \in \frak n.\]
        Since $\alpha (Y) = \alpha(Z) =0$  if  $Y, Z$  do not belong to the  span of $v, J_1v$, the previous  condition is equivalent to
        \begin{align*}
            \alpha(v) g(S(A)J_1^2v, Z) - \alpha(J_1v)g(S(A)J_1v, Z) + \alpha(Z) g(S(A)J_1^2v, v) &= 0, \\
            - \alpha(v) \pi(S(A)Z) + \alpha(J_1v) \pi(S(A)Z) - \alpha(Z) \pi(S(A)v) &= 0, \\
            - \alpha(v) \pi(S(A)Z) + 2\pi(Cv) \pi(S(A)J_1Z) &= \alpha(Z) \pi(S(A)v), \quad \forall Z \in \frak n.
        \end{align*}
        Now, as $\pi(S(A)v) = \frac{1}{\Vert v \Vert ^2} g(Av, v) = 0$ and  $v \perp  {\rm {Im}} (A)$,  we get the two conditions 
        \begin{align*}
            - \alpha(v) \pi(S(A)Z) + 2\pi(Cv) \pi(S(A)J_1Z) &= 0, \\
            - g(-\alpha(v)v + 2 \pi(Cv) J_1v, S(A) Z) &= 0, \quad \forall Z \in \frak n,
        \end{align*}
       which imply   that $\alpha(v)v + 2 \pi(Cv) J_1v$ is orthogonal to ${\rm {Im}} \,  S(A)$. But  $\alpha(v)v + 2 \pi(Cv) J_1v$ is also orthogonal to ${\rm {Im}} (A)$,  then  $\alpha(v)v + 2 \pi(Cv) J_1v \perp {\rm {Im}}  (A^T)$, and we may conclude as  before that $v \perp  {\rm {Im}}  (A^T)$ using that $[J_1, A^T] = 0$.

By \eqref{lem:vperpImA} and \eqref{lem:vperpAT} we obtain that $v \in ( {\rm {Im}} (A))^\perp \cap ({\rm {Im}}  (A^T))^\perp = \ker  (A^T) \cap \ker (A)$ and so $Cv = 0$ by definition of $C$. Then, $g(Cv, v) = 0$ and by \eqref{lem:Czero}   it follows that  $C = 0$.

\end{proof}

In   \cite{Ferreira} it was also  proved that  if $( \frak g (a,v,A), J, g)$ is a  Hermitian  almost abelian Lie algebra  with   $A$  invertible, then $(J, g)$ is LCSKT with 1-form $\alpha$ if and only if  the matrix $A$ is $g$-normal and
 $Re(\lambda) \in \{0, - \frac{a+\alpha(e_{2n})}{2} \}$, for every eigenvalue  $\lambda \in {\rm {Spec}} (A)$. This statement also generalizes  with the same proof to Hermitian structures such that $dH = \alpha \wedge H$ for some closed 1-form $\alpha$.

\smallskip

In view of Remark \ref{remdaptedbasis} we can extend the previous result, using the fact that,   given an almost abelian Lie algebra $\frak g$ with a complex structure $J$,  
the endomorphism $ad_{e_{2n}}\vert_{\mathfrak{n}_1}$ is up to a scalar independent from the choice of a $J$-adapted basis.  
As a consequence,  we can prove the following

\begin{teo} \label{thm:caractLCSKT} 
Let $\mathfrak{g}$ be an almost abelian Lie algebra endowed with a complex structure $J$.  Then $\frak g$  admits a twisted SKT $J$-Hermitian  metric if and only if   
$ad_{e_{2n}}\vert_{\mathfrak{n_1}}$ satisfies the following two conditions:

\begin{enumerate}

\item[(i)]  $ad_{e_{2n}}\vert_{\mathfrak{n_1}}$ is  diagonalizable over $\C$ and 

\item[(ii)]    there exist $\mu \in \R$  such that  $Re(\lambda) \in \{0, \mu\}$,  for every  $\lambda \in  {\rm {Spec}} (ad_{e_{2n}}\vert_{\mathfrak{n_1}}).$
\end{enumerate}
Moreover, if $ad_{e_{2n}}\vert_{\mathfrak{n_1}}$ satisfies  $(i)$ and $(ii)$ then  the twisted SKT  $J$-Hermitian metrics are all  inner products $g'$ on $\frak g$  compatible with $J$ and such that $ad_{e_{2n}}\vert_{\mathfrak{n_1}}$ is $g'$-normal. \end{teo}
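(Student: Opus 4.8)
The plan is to reduce everything to the case analysis already available from Proposition \ref{lem:relLCSKT} and Proposition \ref{pps:antisym}, combined with the normal-form discussion in Remark \ref{remdaptedbasis}. First I would fix a $J$-adapted basis so that $\frak g = \frak g(a,v,A)$, and recall from Remark \ref{remdaptedbasis} that the endomorphism $A = ad_{e_{2n}}|_{\frak n_1}$ is determined by $J$ up to a nonzero scalar; hence conditions $(i)$ and $(ii)$ are well-posed (being a scale-invariant property of the spectrum: diagonalizability is scale-invariant, and "$Re(\lambda)\in\{0,\mu\}$ for some $\mu$" is preserved under $\lambda\mapsto t\lambda$ by replacing $\mu$ with $t\mu$). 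The key observation is that once one rescales $e_{2n}$ so that the algebraic data becomes $(a,v,A)$ with a chosen metric, the quantity $a+\alpha(e_{2n})$ in Proposition \ref{pps:antisym} plays the role of the free parameter $\mu$ — more precisely, one should take $\mu = -\tfrac{1}{2}(a+\alpha(e_{2n}))$ when $v=0$, but the cleaner route is to work intrinsically with $A$ up to scale and let $\mu$ absorb the scaling freedom.

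For the "only if" direction, I would argue as follows. Suppose $(J,g')$ is twisted SKT for some closed $\alpha$. Choose a $g'$-orthonormal $J$-adapted basis, so $\frak g = \frak g(a,v,A)$ with $A$ commuting with $J_1$. By Proposition \ref{pps:antisym}, the matrix $C := S((a+\alpha(e_{2n}))A + A^2 + A^TA)$ vanishes, i.e. $(a+\alpha(e_{2n}))A + A^2 + A^TA \in \frak{so}(n_1)$. Setting $b := a+\alpha(e_{2n})$, this reads $bA + A^2 + A^TA$ is skew. Now I would show this forces $A$ to be $g'$-normal and its eigenvalues to have real part in $\{0,-b/2\}$: taking the symmetric and skew parts of $A = S(A) + K$ with $K$ skew and $[A,A^T]$ controlled, the equation $bA+A^2+A^TA$ skew implies $S(bA + A^2 + A^TA) = bS(A) + S(A^2) + S(A^TA) = 0$; but $S(A^TA) = A^TA$ is positive semidefinite and one computes $S(A^2) + A^TA = 2S(A)^2 + (\text{commutator terms})$; pushing this through (this is exactly the computation behind the invertible-case result of \cite{Ferreira} that the excerpt says "generalizes with the same proof"), one gets that $S(A)$ and $K$ satisfy relations forcing $[A,A^T]=0$ and $S(A)(S(A)+ \tfrac b2 I) = 0$, i.e. $S(A)$ has eigenvalues in $\{0,-b/2\}$. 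Normality of $A$ then gives simultaneous unitary diagonalizability over $\C$, so $(i)$ holds, and the eigenvalues of $A$ have real parts equal to the eigenvalues of $S(A)$, giving $(ii)$ with $\mu = -b/2$. Since $A$ and $ad_{e_{2n}}|_{\frak n_1}$ differ by a scalar, conditions $(i)$–$(ii)$ transfer to $ad_{e_{2n}}|_{\frak n_1}$. This also shows that any twisted SKT $J$-Hermitian metric makes $A$, hence $ad_{e_{2n}}|_{\frak n_1}$, normal, which is one half of the "moreover".

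For the "if" direction, assume $ad_{e_{2n}}|_{\frak n_1}$ is $\C$-diagonalizable with $Re(\lambda)\in\{0,\mu\}$, and let $g'$ be any $J$-compatible inner product making $ad_{e_{2n}}|_{\frak n_1}$ normal. Pick a $g'$-orthonormal $J$-adapted basis; then $\frak g = \frak g(a,v,A)$ with $A$ normal, $[A,J_1]=0$, and (after the scaling identification) the real parts of the eigenvalues of $A$ lie in $\{0,\mu'\}$ for a suitable $\mu'$ depending on the normalization. Because $A$ is normal, $S(A)^2 = S(A^2) + (\text{a vanishing commutator})$ simplifies, and choosing $\alpha$ with $\alpha|_{\frak n}$ determined by $v$ via \eqref{eq:11}–\eqref{eq:13} and $\alpha(e_{2n})$ chosen so that $a+\alpha(e_{2n}) = -2\mu'$, one verifies \eqref{eq:11}–\eqref{eq:14} directly: \eqref{eq:12} and \eqref{eq:13} hold because $v$ is forced to lie in $\ker A \cap \ker A^T$ by the normality plus the eigenvalue constraint (the real-part-zero eigenspace absorbs $v$), and \eqref{eq:14} becomes the statement that $S((a+\alpha(e_{2n}))A + A^2 + A^TA) = 0$, which for normal $A$ with $Re(\mathrm{Spec}\,A)\subset\{0,\mu'\}$ and $a+\alpha(e_{2n}) = -2\mu'$ is an eigenvalue computation: on the $0$-eigenspace of $S(A)$ everything vanishes, on the $\mu'$-eigenspace $(a+\alpha(e_{2n}))A + A^2 + A^TA$ has symmetric part $-2\mu'S(A) + 2S(A)^2 = -2\mu'S(A) + 2\mu'^2 I \cdot(\ldots)$ wait — one checks it is skew. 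Finally $\alpha$ is closed because, $\frak g$ being almost abelian with abelian $\frak n$, $d$ on $\frak g^*$ is controlled by $ad_{e_{2n}}^*$, and the specific $\alpha$ we built (supported on $J_1 v$ and $e_1, e_{2n}$ appropriately) is annihilated by $d$ precisely when \eqref{eq:11} holds.

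The main obstacle I expect is the bookkeeping around the scaling ambiguity: conditions $(i)$–$(ii)$ refer to $ad_{e_{2n}}|_{\frak n_1}$ for an unspecified $J$-adapted basis, whereas Propositions \ref{lem:relLCSKT} and \ref{pps:antisym} are stated for a $g$-orthonormal adapted basis where the scale of $e_{2n}$ is pinned by the metric. Reconciling "$\mu$ exists" with "$a+\alpha(e_{2n}) = -2\mu'$ for the metric-normalized data" requires carefully tracking how $a$, $v$, $A$, and $\alpha(e_{2n})$ all rescale when $e_{2n}$ is rescaled, and checking that the existence of a closed $\alpha$ with the right value of $\alpha(e_{2n})$ is not obstructed — essentially verifying that $\alpha(e_{2n})$ is a free real parameter once \eqref{eq:11} is imposed. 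The secondary subtlety is proving the equivalence, for $A$ commuting with $J_1$, between "$bA + A^2 + A^TA \in \frak{so}(n_1)$ for some $b\in\R$" and "$A$ normal with $Re(\mathrm{Spec}\,A)\subset\{0,-b/2\}$"; the forward direction needs the argument that the symmetric part of the equation forces $[A,A^T]=0$, which I would extract from the positive-semidefiniteness of $A^TA$ together with $\mathrm{tr}$ considerations, exactly paralleling the invertible case in \cite{Ferreira}.
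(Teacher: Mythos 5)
Your ``only if'' direction is essentially sound and in fact takes a different route from the paper: writing $A=S(A)+K$ and using that the symmetric part of $bA+A^2+A^TA$ equals $bS(A)+2S(A)^2+[S(A),K]$, one can indeed conclude (since $[S(A),K]=-(bS(A)+2S(A)^2)$ is then a polynomial in $S(A)$, while $\operatorname{tr}([S(A),K]\,p(S(A)))=0$ for every polynomial $p$, forcing $[S(A),K]=0$ and $S(A)(S(A)+\tfrac b2\mathrm{Id})=0$), so $A$ is normal with $\mathrm{Re}\,\mathrm{Spec}(A)\subset\{0,-b/2\}$ without any invertibility hypothesis. The paper instead argues via eigenvectors of the complexification plus the sharp inequality $\Vert S(M)\Vert^2\geq\sum\mathrm{Re}(\lambda)^2$ (equality iff normal). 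But note that you do not actually carry out this step: you defer to ``the computation behind the invertible-case result of \cite{Ferreira}'', whose stated hypothesis ($A$ invertible) does not cover the general case, so as written this is an assertion rather than a proof.

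The converse direction has genuine gaps. First, you take ``any $J$-compatible inner product making $ad_{e_{2n}}\vert_{\mathfrak{n}_1}$ normal'' without showing one exists; the existence of such a metric is a real part of the theorem (the paper constructs it by complexifying, making an eigenbasis unitary, averaging with $J$, and checking via $A^*=Q(A)$ and $[A^*,J_1]=0$ that normality survives the averaging). Second, your construction of $\alpha$ is incorrect as stated: the claim that ``$v$ is forced to lie in $\ker A\cap\ker A^T$'' is false in general (e.g.\ $A=p\,\mathrm{Id}$, $v\neq0$), so your justification of \eqref{eq:12}--\eqref{eq:13} fails, and with an $\alpha$ not vanishing on $\mathfrak{n}_1$ the right-hand side of \eqref{eq:14} does not vanish, so \eqref{eq:14} does \emph{not} reduce to the skew-symmetry of $S((a+\alpha(e_{2n}))A+A^2+A^TA)$; your own eigenspace verification of \eqref{eq:14} breaks off mid-computation. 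The paper avoids all this by first proving the dichotomy that either $\alpha\vert_{\mathfrak{n}_1}=0$ or $A\in\mathfrak{so}(\mathfrak{n}_1)$, and then in the first case simply taking $\alpha$ proportional to $e^{2n}$ with $\alpha(e_{2n})=-a-2\mu$, for which \eqref{eq:11}--\eqref{eq:13} are trivial and \eqref{eq:14} is exactly the skewness condition (closedness is equivalent to \eqref{eq:11} \emph{and} \eqref{eq:12}, not to \eqref{eq:11} alone as you assert), treating the antisymmetric case separately to handle $v\neq0$. Without the existence of the normalizing metric and a correct construction of $\alpha$ for an arbitrary such metric (arbitrary $v$), both the existence statement and the ``moreover'' characterization remain unproved in your proposal.
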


\begin{proof} Suppose  first that   $(\mathfrak{g},J)$ admits a  $J$-Hermitian metric  $g$ such that $dH = \alpha \wedge H$ for some closed 1-form $\alpha$. Then  we know that there exists  an  orthonormal   $J$-adapted basis  $(e_i)$   of $\mathfrak{g}$ such that $\mathfrak{n}^\perp = \mbox{span} \langle e_{2n} \rangle$, $e_1 = - Je_{2n}$ spans $\mathfrak{n}_1^\perp \cap \mathfrak{n}$,  $\frak n_1 = {\mbox{span}} \langle e_2,  \ldots,  e_{2n-1}\rangle$ and $J e_2 = e_3, \ldots, J e_{2n-2} = e_{2n -1}$.  With respect to  the previous  basis,  the endomorphism  $ad_{e_{2n}} \vert_{\frak n}$ can be written as
    $B = 
        \begin{pmatrix}
            a & 0 \\
            v & A
        \end{pmatrix}$ with $a \in \R, v \in \mathfrak{n}_1, A \in \mathfrak{gl(n_1)}$.  By Proposition  \ref{pps:antisym} it follows  that  $(a + \alpha(e_{2n}))A + A^2 + A^T A \in \mathfrak{so(n_1)}.$   To prove that $A$ is diagonalizable    and that  the real part of the eigenvalues of $A$ are either 0 or $- \frac{1}{2}(a + \alpha(e_{2n}))$
 we  can proceed in a similar way as in   \cite[Lemma 4.2, Theorem 4.3]{Ferreira} (see also \cite{Arroyo_2019}). We consider the complexification $\mathfrak{n}_1^{\C}$, to which we extend $A$ by linearity and $g$ as an Hermitian inner product. Then, if $\lambda$ is an eigenvalue of $A$ with eigenvector $z$, Proposition \ref{pps:antisym} gives
\[0 = g(((a + \alpha(e_{2n}))A + A^2 + A^T A)z, \bar{z}) = \lambda(a + \alpha(e_{2n}) + 2\lambda) g(z, \bar{z}), \quad  \forall z \in \mathfrak{n}_1^{\C}. \]
Therefore, if $\lambda \notin \{ 0, - \frac{1}{2}(a + \alpha(e_{2n}))\}$, it follows that $g(z, \bar{z}) = 0$. Writing $z = x + iy$, this means that $\Vert x \Vert = \Vert y \Vert$ and $g(x, y) = 0$. If $\lambda = \mu + i \nu, Ax = \mu x - \nu y, Ay = \mu y + \nu x$, then applying Proposition \ref{pps:antisym} again gives us 
\[0 = g(((a + \alpha(e_{2n}))A + A^2 + A^T A)x, x) = \mu(a + \alpha(e_{2n}) + 2\mu). \]
As a consequence, $\mu = Re(\lambda) \in \{0, - \frac{1}{2}(a + \alpha(e_{2n})) \}$ and  we  can use the following general  linear algebra estimate:
if $M \in \mathfrak{gl}_m(\mathbb{C})$, then 
\[\Vert S(M) \Vert^2 \geq \sum_{\lambda \in {\rm {Spec}} (M)} Re(\lambda)^2\]
with equality if and only if $M$ is a normal matrix  (see \cite[Corollary A.2]{Arroyo_2019}).

Since $J_1$ commutes with $A$, the eigenvalues of $A$ come in pairs and can be arranged such that
\[\lambda_1 =  \ldots  = \lambda_{2k} = - \frac{1}{2}(a + \alpha(e_{2n})), \lambda_{2k+1} = ... = \lambda_{2n-2} = 0, \]
for some integer $k$. Taking traces in Proposition \ref{pps:antisym}  we obtain 
\[\Vert S(A) \Vert^2 = \frac{1}{2} tr(A^2 + A^T A) = - \frac{1}{2} (a + \alpha(e_{2n})) tr(A) = + \frac{1}{2} k (a + \alpha(e_{2n}))^2 = \sum_{\lambda \in {\rm {Spec}}(A)} Re(\lambda)^2. \]
Thus by  the previous estimate  $A$ is normal and so it is  in particular diagonalizable.

We will now prove the converse. Fixed  a $J$-adapted basis, assume  that  $\frak g = \frak g (a,v,A)$  with $A$  diagonalizable and  such that  $Re(\lambda) \in \{0, \mu\}$,  for every  $\lambda \in  {\rm {Spec}} (A)$ for some $\mu \in \R$.  Note that these two conditions  on $A$ do not depend on the choice of the $J$-adapted basis.
We  will  show that  there  exists a  $J$-Hermitian metric  $g$ with respect to which $A$ is normal. 

We first complexify $\mathfrak{n}_1$ to obtain an eigenbasis $\varepsilon$  for $A$.  We can  then prove that  there exists an  hermitian inner product on $\frak n_1$  such that $\varepsilon$ is orthonormal. Indeed,  if we identify $\mathfrak{n}_1 \otimes_{\mathbb{R}} \mathbb{C}$ with $\mathbb{C}^{2n-2}$ and consider the transition matrix $P$ from the canonical basis  of $\mathbb C^{n -2}$ to $\varepsilon$,  we can consider the bilinear form $h$ whose associated matrix with respect to  the canonical basis is given by $\overline{(P^{-1})^T}P^{-1}$.  This matrix is self-adjoint and positive-definite, so $h$ is an inner product, and since the canonical basis is orthonormal for the canonical inner product on $\mathbb{C}^{2n-2}$, it follows that $\varepsilon$ is orthonormal  with respect to $h$.  By definition, $A$ is diagonal in an orthonormal basis for $h$, so $A$  normal with respec to $h$.
To  determine a   $J$-Hermitian metric  $g$   on $(\frak g, J)$ with respect to which $A$ is normal,   we first  define $g$ on $\mathfrak{n}_1$ as    \[ g(x,y) = \frac{1}{2} Re(h(x, y) + h(Jx, Jy)), \quad \forall x,y \in \frak n_1. \] By construction, $J$ is an isometry of  $(\frak n_1, g)$. 

Let $A^*$ be the $h$-adjoint of $A$, then since $A$ is $h$-normal, $A^*$ is a polynomial in $A$. To see this, first diagonalize simultaneously $A$ and $A^*$, then it is easy to see that any formal polynomial $Q$ sending each eigenvalue of $A$ on its conjugate (e.g. the corresponding Lagrange polynomial) satisfies $A^* = Q(A)$.  Since  $J$  is integrable, it follows that $A^*$ commutes with $J_1$, and we have
\begin{align*}
    g(Ax, y) &= \frac{1}{2} Re \left ( h(x, Ay) + h(J_1Ax, J_1y)\right ) \\
    &= \frac{1}{2} Re \left ( h(A^*x, y) + h(AJ_1x, J_1y)\right ) \\
    &= \frac{1}{2} Re \left ( h(A^*x, y) + h(J_1x, A^*J_1y)\right ) \\
    &= \frac{1}{2} Re \left ( h(A^*x, y) + h(J_1x, J_1A^*y)\right ) \\
    &=g(x, A^*y), \quad \forall x, y \in \frak n_1.
\end{align*}
 As a consequence, $A$ is $g$-normal, and we may extend $g$ to $\mathfrak{g}$ arbitrarily (and $J$-orthogonally) to obtain the desired metric.

We will now construct a  closed  $1$-form $\alpha$ such that $dH = \alpha \wedge H$, i.e.  satisfying the  four conditions in Proposition \ref{lem:relLCSKT}. To do this,  we first assume that such an $\alpha$ exists and then we  will derive  the conditions that  this hypothetical $\alpha$  has to satisfy. These conditions will allow us  to construct $\alpha$ and also  to determine all possible $\alpha$. We  know that there exists a $g$-orthonormal  $J$-adapted basis   $(e_i)$  of  $\mathfrak{g}$ such that $\mathfrak{n}^\perp = \mbox{span} \langle e_{2n} \rangle$,  $e_1 = - Je_{2n}$ spans $\mathfrak{n}_1^\perp \cap \mathfrak{n}$ and $\mathfrak{n}_1 = \mbox{span}  \langle  e_2,  \ldots,  e_{2n-1} \rangle$. With respect to this basis  we  have $B = 
    \begin{pmatrix}
        a & 0 \\
        v & A
    \end{pmatrix}$ with $a \in \R, v \in \mathfrak{n}_1, A \in \mathfrak{gl(n_1)}$.

We can show that  either $\alpha \vert_{\mathfrak{n}_1} = 0$,  or $A$ is antisymmetric.
Indeed, let $(\varepsilon'_k)$ be an unitary basis of $\mathfrak{n}_1 \otimes_{\R} \C$ in which both $A$ and $J_1$ are diagonal (it exists since  $A$ and $J_1$  commute and are both $g$-normal). Let $$A\varepsilon'_l = \lambda_l\varepsilon'_l, J\varepsilon'_l = \nu_l \varepsilon'_l, \alpha_l = \alpha(\varepsilon'_l).$$ Then \eqref{eq:13} is satisfied if and only if
\[\alpha_j \lambda_k \nu_k \delta_{k,l} + \alpha_j \bar{\lambda_l} \nu_k \delta_{k,l} - \alpha_k \lambda_j \nu_j \delta_{j,l} - \alpha_k \bar{\lambda_l} \nu_j \delta_{j,l} + \alpha_l \lambda_k \nu_k \delta_{k,j} + \alpha_l \bar{\lambda_j} \nu_k \delta_{k,j} = 0\]
where by  $\delta_{j, k}$  we denote  the Kronecker symbol.
When $j=k$, we obtain $\alpha_l Re(\lambda_k) = 0$, so either $\alpha$ is zero on $\mathfrak{n}_1$, or all of the eigenvalues of $A$ are purely imaginary, which is equivalent, by the spectral theorem,  to $A$ being antisymmetric as $A$ is normal.

We can now finish  the proof  studying separately the two cases $\alpha \vert_{\mathfrak{n}_1} = 0$ and $A$ antisymmetric, constructing in both cases an explicit closed $1$-form $\alpha$ such that $d H = \alpha \wedge H$. 

 If  $\alpha \vert_{\mathfrak{n}_1} = 0$, we can prooced  as in \cite{Ferreira}.
Since  $\alpha$ vanishes  on $\mathfrak{n}_1$,  by \eqref{eq:11} we obtain    $a \, \alpha(e_1) = 0$, \eqref{eq:12} and \eqref{eq:13} are trivially satisfied, and \eqref{eq:14} gives us the condition $$(a + \alpha(e_{2n}))A + A^2 + A^T A \in \mathfrak{so(n_1)}.$$ 
In a simultaneous eigenbasis, we see that the previous condition is equivalent to 
\[(a + \alpha(e_{2n}))\lambda + \lambda^2 + \vert \lambda \vert^2 \in i \R,  \] for every $\lambda \in {\rm {Spec}} (A)$, and taking real parts gives 
\[(a + \alpha(e_{2n}) + 2 Re(\lambda)) Re(\lambda) = 0. \]
Since  $Re(\lambda) \in \{0, \mu\}$, we get the condition $\alpha(e_{2n}) = -a - 2\mu$ if $A$ is not antisymmetric, and no condition else.

In the case $A \in \mathfrak{so(n_1)}$,  since $S(A) = 0$, \ the condition \eqref{eq:13} is trivially satisfied. Since the matrix  $$(a + \alpha(e_{2n}))A + A^2 + A^T A = (a + \alpha(e_{2n}))A$$ is also antisymmetric we have that $ S(a + \alpha(e_{2n}))A + A^2 + A^T A )=0$. Therefore,   \eqref{eq:14} is equivalent to $\alpha(Y)g(v, Z) = \alpha(Z)g(v, Y),$ for every $Y, Z \in \mathfrak{n}_1$. Then, either $v = 0$, or 
\[\alpha(Z) = \alpha(v)  \, g \left ( \frac{v}{\Vert v \Vert^2}, Z \right ), \quad  \forall Z \in \frak n_1.\]
So we get  the following    additional conditions:
\begin{enumerate} 
\item[ i) ] $
a  \, \alpha(e_1) + \alpha(v) = 0,
$
or

\item[ii)]  $\begin{cases}
    
    \alpha \circ A = 0 \text{ if } v = 0, \\[0.5ex]
    
    v \perp {\rm {Im}}  A \text{ and } \alpha(Z) = \alpha(v) g(\frac{v}{\Vert v \Vert^2}, Z) \text{ if } v \neq 0, \quad \forall Z \in \frak n_1,
    \end{cases}$
    \end{enumerate} 
   For  all the cases we can choose $\alpha$ proportional to $e^{2n}$. 

As a consequence, in both cases $\alpha \vert_{\mathfrak{n}_1} = 0$ and $A$ antisymmetric, we can construct  a  closed $1$-form $\alpha$ and determine also all possible $\alpha$ (see Table \ref{fig:alphas}).

Finally, we  can prove that  any other   $J$-Hermitian metric  $g'$   satisfies   the condition  $dH' = \alpha \wedge H'$,  for some closed 1-form $\alpha$,  if and only if  $g'$ is such that $A$ is $g'$-normal.  To see this, we consider a $g'$-orthonormal   $J$-adapted basis  $(e'_i)$ and we write   the endomorphism  $ad_{e'_{2n}} \vert_{\frak n}$ as
    $
        \begin{pmatrix}
            a' & 0 \\
            v' & A'
        \end{pmatrix}$ with $a' \in \R, v' \in \mathfrak{n}_1$ and $A' \in \mathfrak{gl(n_1)}$. Note that $A' $ is  proportional to $A$.
By the  first part of the proof   if   $dH' = \alpha \wedge H'$ for some closed $1$-form, then    $A$ has to be $g'$-normal, and conversely if  $A$ is $g'$-normal, then the last part of the proof tells us that we can find  a closed $1$-form $\alpha$ such that $dH' = \alpha \wedge H'$.

\end{proof}

\begin{remark}  Note that,  if we choose a $J$-adapted basis $(e_i)$,  the previous proof also  provides for each Hermitian metric obtained from Theorem 2.1 the set of closed 1-forms $\alpha$ such that $dH = \alpha \wedge H$ (see Table \ref{fig:alphas}). 
Moreover, it is easy to determine  which $J$-Hermitian metrics  are in fact SKT (i.e. satisfy $dH = 0)$:  the  affine space of closed $1$-forms $\alpha$ such that $dH  = \alpha \wedge H$ has to coincide with   the vector space $\ker(H) = \{\alpha, \alpha \wedge H = 0\}$. 
\end{remark}

As a consequence  of Theorem  \ref{thm:caractLCSKT}  we obtain the following  characterization of LCSKT almost abelian Lie algebras:

\begin{corol} \label{cor-LSKT}
Let $\mathfrak{g}$ be an almost abelian Lie algebra endowed with a complex structure $J$.  Then $\frak g$  admits a LCSKT  metric $g$ if and only if  in any $J$-adapted basis, $\frak g = \frak g (a,v,A)$  with $A$  satisfying the following two conditions:

\begin{enumerate}

\item[(i)]  $A$ is  diagonalizable over $\C$, 

\item[(ii)] either $Re(\lambda) = 0$ for every  $\lambda \in  {\rm {Spec}} (A)$ or   there exists $\mu \in \R \setminus  \{  -\frac{a}{2} \}$  such that  $Re(\lambda) \in \{0, \mu\}$,  for every  $\lambda \in  {\rm {Spec}} (A).$

\end{enumerate}
Moreover, if  $\frak g = \frak g (a,v,A)$, with $A$ satisfying  $(i)$ and $(ii)$,  the LCSKT metrics are all the inner products compatible with $J$ and such that $A$ is $g'$-normal.
\end{corol}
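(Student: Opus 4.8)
The plan is to derive Corollary~\ref{cor-LSKT} from Theorem~\ref{thm:caractLCSKT}, the only extra ingredient being a closer look at \emph{which} closed $1$-forms $\alpha$ occur in the proof of that theorem. A $J$-Hermitian metric is LCSKT exactly when it is twisted SKT with respect to \emph{some nonzero} closed $\alpha$; and since Theorem~\ref{thm:caractLCSKT} already identifies the twisted SKT $J$-Hermitian metrics with the $J$-compatible inner products $g'$ for which $A=ad_{e_{2n}}\vert_{\mathfrak n_1}$ is $g'$-normal, everything reduces to deciding, for such a $g'$, whether some nonzero closed $\alpha$ satisfies $dH'=\alpha\wedge H'$. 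At the outset I would record, via Remark~\ref{remdaptedbasis}, that changing the $J$-adapted basis multiplies the pair $(a,A)$ by a common nonzero scalar, so that $Re(\lambda)$ and the set $\{0,-a/2\}$ get rescaled together and conditions $(i)$, $(ii)$, including the exclusion $\mu\neq -a/2$, do not depend on the chosen basis; in particular one may pass to a $g'$-orthonormal $J$-adapted basis, in which $B=\begin{pmatrix}a&0\\ v&A\end{pmatrix}$ and $A$ is a normal matrix.

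The argument then follows the dichotomy already present in the proof of Theorem~\ref{thm:caractLCSKT}. If $Re(\lambda)=0$ for every $\lambda\in{\rm Spec}(A)$, then $A$, being normal with purely imaginary spectrum, is $g'$-antisymmetric, and $\alpha:=e^{2n}$ works: it is closed because $\mathfrak n$ is an abelian ideal on which $e^{2n}$ vanishes, and, as $\alpha$ annihilates $\mathfrak n$, the conditions \eqref{eq:11}, \eqref{eq:12} and \eqref{eq:13} are immediate, while \eqref{eq:14} holds since its right-hand side vanishes and $(a+\alpha(e_{2n}))A+A^2+A^TA$ is again antisymmetric (indeed $A^2+A^TA=0$ for $A\in\mathfrak{so}(\mathfrak n_1)$). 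So every twisted SKT metric is LCSKT, which is the first alternative in $(ii)$. If instead $Re(\lambda_0)\neq 0$ for some eigenvalue $\lambda_0$, then $A$ is not $g'$-antisymmetric, the (then unique, nonzero) real number $\mu$ with $Re(\lambda)\in\{0,\mu\}$ equals $Re(\lambda_0)$, and the case analysis in the proof of Theorem~\ref{thm:caractLCSKT} shows that every admissible closed $\alpha$ must satisfy $\alpha\vert_{\mathfrak n_1}=0$, $a\,\alpha(e_1)=0$ and $\alpha(e_{2n})=-a-2\mu$. Such an $\alpha$ can be chosen nonzero if and only if $-a-2\mu\neq 0$, i.e.\ $\mu\neq -a/2$ (when $a=0$ this is automatic, since then $\mu\neq 0$), and in that case $\alpha:=(-a-2\mu)e^{2n}$ does the job: it is closed and annihilates $\mathfrak n$, so again only \eqref{eq:14} needs a check, where $(a+\alpha(e_{2n}))A+A^2+A^TA$ is $g'$-normal with the eigenvalue over $\lambda$ having vanishing real part $2\,Re(\lambda)\bigl(Re(\lambda)-\mu\bigr)=0$, hence lies in $\mathfrak{so}(\mathfrak n_1)$. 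Thus such a metric is LCSKT precisely when $\mu\neq -a/2$, the second alternative in $(ii)$.

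Combining the two cases: $\mathfrak g$ admits an LCSKT $J$-Hermitian metric if and only if it admits a twisted SKT one --- equivalently, by Theorem~\ref{thm:caractLCSKT}, $A$ is diagonalizable over $\C$ and $Re(\lambda)\in\{0,\mu\}$ for some $\mu\in\R$ --- which moreover lies in the first case above or in the second with $\mu\neq -a/2$; and this conjunction is exactly $(i)$ together with $(ii)$, the auxiliary hypothesis on the existence of $\mu$ being implied by $(ii)$ (take $\mu=0$ in its first alternative). Since the dichotomy above depends only on ${\rm Spec}(A)$, hence not on the particular twisted SKT metric, its outcome is the same for all inner products $g'$ compatible with $J$ and making $A$ normal; together with the obvious fact that every LCSKT metric is twisted SKT, this gives the final assertion that the LCSKT metrics are exactly those $g'$. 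I expect the only genuine work to be the bookkeeping in the second case --- checking that ``$\alpha\vert_{\mathfrak n_1}=0$'' is really forced there, and that the existence of a nonzero closed admissible $\alpha$ reduces to the single numerical condition $\mu\neq -a/2$ --- which is exactly where the internal structure of the proof of Theorem~\ref{thm:caractLCSKT} is used.
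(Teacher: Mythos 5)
Your proposal is correct and follows essentially the same route as the paper: the corollary is deduced from Theorem \ref{thm:caractLCSKT} by re-running the case analysis of its proof (either $\alpha\vert_{\mathfrak n_1}=0$, forcing $\alpha(e_{2n})=-a-2\mu$ and $a\,\alpha(e_1)=0$, or $A\in\mathfrak{so}(\mathfrak n_1)$, where a nonzero multiple of $e^{2n}$ works), together with the basis-independence of $(a,A)$ up to a common scalar, which is exactly the content the paper encodes in Table \ref{fig:alphas}. The only point worth spelling out, which you leave implicit, is that when $\mu=-\frac a2\neq 0$ one has $a\neq 0$, so the constraint $a\,\alpha(e_1)=0$ kills the remaining freedom and indeed no nonzero admissible $\alpha$ exists.
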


\section{Classification in  dimension six}  \label{sect4}

 If   $\mathfrak{g}$  is a $4$-dimensional  almost abelian Lie algebra with a complex structure $J$, as a consequence of  Corollary \ref{cor-LSKT}  one can  easily show that there exists  a twisted SKT $J$-Hermitian metric $g$. Indeed,  the matrix  $A$ is of  order  $2$ and commutes with $J$, so   it  is diagonalizable and ${\mbox {Re}}  ({\rm {Spec}} (A))$ is reduced to a single element. 

\smallskip

For $6$-dimensional almost abelian Lie algebras
we recall the following characterization for the existence of a complex structure  in terms of  $J$-adapted bases.

\begin{teo}[{\cite[Theorem 3.2]{fino2021generalized}}] \label{thm:class6cplx}
Let $\mathfrak{g}$ be a $6$-dimensional almost abelian Lie algebra. Then $\mathfrak{g}$ admits a complex structure $J$  if and only if the matrix $A$ associated to  $ad_{e_6}\vert_{\mathfrak{n}_1}$,  with respect to a $J$-adapted basis $(e_i)$, is   one of the following  (remember that  $ad_{e_{6}}$ is defined up to a scalar):
\[
    \begin{pmatrix}
        p & 0 & 0 & 0 \\
        0 & q & 0 & 0 \\
        0 & 0 & r & 0 \\
        0 & 0 & 0 & s
    \end{pmatrix},
    \begin{pmatrix}
        p & 1 & 0 & 0 \\
        -1 & p & 0 & 0 \\
        0 & 0 & q & 0 \\
        0 & 0 & 0 & r
    \end{pmatrix},
    \begin{pmatrix}
        p & 1 & 0 & 0 \\
        -1 & p & 0 & 0 \\
        0 & 0 & q & r \\
        0 & 0 & -r & q
    \end{pmatrix},
    \]
\[
    \begin{pmatrix}
        p & 1 & 0 & 0 \\
        0 & p & 0 & 0 \\
        0 & 0 & p & 1 \\
        0 & 0 & 0 & p
    \end{pmatrix},
    \begin{pmatrix}
        p & 1 & -1 & 0 \\
        -1 & p & 0 & -1 \\
        0 & 0 & p & 1 \\
        0 & 0 & -1 & p
    \end{pmatrix}.
\]
\end{teo}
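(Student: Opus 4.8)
The plan is to translate the statement into a normal-form problem for $2\times2$ complex matrices and then unwind it. Suppose $\mathfrak g$ is a $6$-dimensional almost abelian Lie algebra admitting a complex structure $J$, and pick a $J$-adapted basis. By Remark~\ref{remdaptedbasis} the bracket of $\mathfrak g$ is encoded by the data $(a,v,A)$, where $A$ is the $4\times4$ matrix of $ad_{e_6}|_{\mathfrak{n}_1}$, well defined up to a nonzero real scalar and up to the ambiguity in the choice of $J$-adapted basis, and where the integrability of $J$ is equivalent to $B\mathfrak{n}_1\subseteq\mathfrak{n}_1$ together with $[A,J_1]=0$, with $J_1=J|_{\mathfrak{n}_1}$. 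Conversely, given any $A$ commuting with a complex structure on $\mathbb R^4$ and any $a\in\mathbb R$, $v\in\mathfrak{n}_1$ with $(a,v,A)\neq0$, one obtains a $6$-dimensional almost abelian Lie algebra $\mathfrak{g}(a,v,A)$ carrying a complex structure. So the theorem reduces to classifying, up to conjugacy and real rescaling, the real $4\times4$ matrices commuting with a complex structure.

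Next I would pass to the complex picture. Identifying $(\mathbb R^4,J_1)$ with $\mathbb C^2$, the condition $[A,J_1]=0$ says exactly that $A$ is $\mathbb C$-linear, so it corresponds to a matrix $A_{\mathbb C}\in\mathfrak{gl}_2(\mathbb C)$; a change of $J$-adapted basis acts on $A_{\mathbb C}$ by $GL_2(\mathbb C)$-conjugation, while the scalar ambiguity acts by multiplication by a nonzero real number. By the Jordan normal form over $\mathbb C$, $A_{\mathbb C}$ is conjugate to $\mathrm{diag}(\lambda,\mu)$ with $\lambda,\mu\in\mathbb C$ (possibly equal) or to a single Jordan block $\bigl(\begin{smallmatrix} \lambda & 1 \\ 0 & \lambda \end{smallmatrix}\bigr)$. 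I would then split into cases according to how many of $\lambda,\mu$ (or just $\lambda$) are non-real and translate each complex normal form back to its real $4\times4$ shape in a $J$-adapted basis: a real eigenvalue gives a scalar $2\times2$ diagonal block, a non-real eigenvalue $p\pm iq$ gives a block $\bigl(\begin{smallmatrix} p & -q \\ q & p \end{smallmatrix}\bigr)$, and a Jordan off-diagonal $1$ becomes the corresponding $2\times2$ block; the real rescaling is finally used to normalise one non-real eigenvalue so that its imaginary part equals $1$. This yields, up to the choice of sign conventions for $J_1$, the five matrices in the statement: the diagonalisable case with $0$, $1$ or $2$ non-real eigenvalues gives the first three (the commuting condition forcing, in the purely real case, the eigenvalues to occur with even multiplicity), and the Jordan-block case with $\lambda$ real, respectively non-real, gives the last two.

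For the converse I would simply exhibit, for each of the five displayed matrices $A$, a complex structure $J_1$ on $\mathbb R^4$ commuting with it: for the diagonal-type matrices this is immediate from the block structure, and for the two Jordan-type matrices one takes the involution swapping the two equal $2\times2$ diagonal blocks. Then each listed $A$ really is $ad_{e_6}|_{\mathfrak{n}_1}$ for a $6$-dimensional almost abelian Lie algebra endowed with a complex structure (taking for instance $a=v=0$, or $a\neq0$ when $A=0$, to keep $\mathfrak g$ non-abelian).

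The main obstacle is the bookkeeping in the middle step: one must make sure that the descent from $GL_2(\mathbb C)$-conjugacy plus one real scaling to genuinely inequivalent Lie algebras neither merges nor omits cases. In particular one needs to check that a single $2\times2$ nilpotent block cannot commute with any complex structure, which is what forces the two diagonal blocks of the Jordan-type forms to be governed by the same eigenvalue; that one real scaling can normalise only one imaginary part, so the two-non-real-eigenvalue form retains a genuine parameter while the one-non-real-eigenvalue form does not; and that the relations imposed by $[A,J_1]=0$ on the remaining entries are correctly reflected in each displayed matrix. Matching the signs in the rotation blocks to the chosen convention for $J_1$ is then routine.
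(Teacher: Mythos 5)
Your overall strategy is the natural one for this statement (which the paper itself does not prove but quotes from \cite{fino2021generalized}): use Remark~\ref{remdaptedbasis} to reduce the existence of $J$ to the condition $B\mathfrak{n}_1\subseteq\mathfrak{n}_1$, $[A,J_1]=0$, identify $(\mathfrak{n}_1,J_1)$ with $\mathbb{C}^2$ so that $A$ becomes a matrix in $\mathfrak{gl}_2(\mathbb{C})$ well defined up to $GL_2(\mathbb{C})$-conjugation and a real scaling, and run through the complex Jordan forms, realifying at the end. Your forward direction along these lines is essentially correct. The genuine gap is in the converse and in how you reconcile it with your own forward analysis. As you yourself note, $[A,J_1]=0$ forces every real eigenvalue of $A$ to have even multiplicity (and, in the mixed case, the two real eigenvalues of the second displayed matrix to coincide): indeed $J_1$ preserves each eigenspace, and on a one-dimensional real eigenspace no endomorphism squares to $-\mathrm{id}$. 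So what your argument actually produces is $\mathrm{diag}(p,p,q,q)$ and $R(p,1)\oplus\mathrm{diag}(q,q)$, not the unconstrained $\mathrm{diag}(p,q,r,s)$ and $R(p,1)\oplus\mathrm{diag}(q,r)$ of the statement. Consequently your converse claim that ``for the diagonal-type matrices [a commuting $J_1$] is immediate from the block structure'' is false for generic parameters of the first two displayed families; a correct proof must either impose these pairing constraints explicitly or make precise in what (conjugation-invariant, constrained) sense the displayed families are to be read, exactly the bookkeeping you defer to the end.

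There are two further concrete problems with the converse as written. First, a complex structure is not an ``involution'': the map you want on the Jordan-type forms is $J(x,y)=(-y,x)$, which squares to $-\mathrm{id}$, not the swap $(x,y)\mapsto(y,x)$. Second, even the corrected swap-type $J$ commutes with the fourth matrix but \emph{not} with the fifth: writing the fifth matrix in $2\times2$ blocks as $\left(\begin{smallmatrix}X & -I\\ 0 & X\end{smallmatrix}\right)$, commutation with $J(x,y)=(-y,x)$ would force the off-diagonal block to vanish. For the fifth matrix the commuting complex structure is instead the one acting inside each consecutive pair, $J_1e_2=e_3$, $J_1e_4=e_5$ (with which, conversely, the fourth matrix does not commute). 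This also shows that the five matrices of the statement are not all expressed in a basis adapted to one and the same convention for $J_1$, so the list can only be understood up to conjugation; your proposal acknowledges this only as a matter of ``sign conventions'', but it is precisely where the case-by-case verification of the converse has to be done carefully.
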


In \cite{fino2021generalized}  all the endomorphisms $ad_{e_{6}}$ admitting such $A$, were determined, obtaining in this way  a classification  of $6$-dimensional  almost abelian Lie algebras admitting a complex structure
 and also  a  classification   of  $6$-dimensional SKT  almost abelian  Lie algebras.
Combining    Theorem  \ref{thm:class6cplx} with   Theorem \ref{thm:caractLCSKT}  we can prove the following

\begin{corol} \label{csq:class6dHaH}

Let $\mathfrak{g}$ be a $6$-dimensional almost abelian Lie algebra  equipped with a complex structure $J$. Then $\mathfrak{g}$ admits a  twisted SKT  metric if and only if the matrix $A$ of  $ad_{e_{6}}\vert_{\mathfrak{n_1}}$,  with respect to a $J$-adapted basis,  has one of the following  expressions: \[
    \begin{pmatrix}
        p & 0 & 0 & 0 \\
        0 & p & 0 & 0 \\
        0 & 0 & p & 0 \\
        0 & 0 & 0 & p
    \end{pmatrix},
    \begin{pmatrix}
        p & 0 & 0 & 0 \\
        0 & p & 0 & 0 \\
        0 & 0 & 0 & 0 \\
        0 & 0 & 0 & 0
    \end{pmatrix},
\]
\[
    \begin{pmatrix}
        p & 1 & 0 & 0 \\
        -1 & p & 0 & 0 \\
        0 & 0 & p & 0 \\
        0 & 0 & 0 & p
    \end{pmatrix},
    \begin{pmatrix}
        0 & 1 & 0 & 0 \\
        -1 & 0 & 0 & 0 \\
        0 & 0 & q & 0 \\
        0 & 0 & 0 & q
    \end{pmatrix},
    \begin{pmatrix}
        p & 1 & 0 & 0 \\
        -1 & p & 0 & 0 \\
        0 & 0 & 0 & 0 \\
        0 & 0 & 0 & 0
    \end{pmatrix},
\]
\[
    \begin{pmatrix}
        p & r & 0 & 0 \\
        -r & p & 0 & 0 \\
        0 & 0 & p & s \\
        0 & 0 & -s & p
    \end{pmatrix},
    \begin{pmatrix}
        p & r & 0 & 0 \\
        -r & p & 0 & 0 \\
        0 & 0 & 0 & s \\
        0 & 0 & -s & 0
    \end{pmatrix}.
\]
In the notations of \cite{fino2021generalized}, the corresponding Lie algebras are:
$\mathfrak{l_1}^{p, p}$, $\mathfrak{l}_1^{p, 0} \cong \mathfrak{l}_1^{0, p} \cong \mathfrak{l}_{17}^p$, $\mathfrak{l}_2^{q, 0} \cong \mathfrak{l}_3^{0, q} \cong \mathfrak{l}_{18}^q$,$\mathfrak{l}_2^{q, 1} \cong \mathfrak{l}_3^{1, q}$, $\mathfrak{l}_4^0 \cong \mathfrak{l}_{16}$, $\mathfrak{l}_4^1 \cong \mathfrak{l}_5^1$, $\mathfrak{l}_8^{p, q, q}, \mathfrak{l}_8^{p, q, 0}$, $\mathfrak{l}_8^{p, 0, s} \cong \mathfrak{l}_{19}^{p, s}$, $\mathfrak{l}_{11}^{p,q,q,s}, \mathfrak{l}_{11}^{p,q,0,s}, \mathfrak{l}_{11}^{p,0,r,s}$ $\mathfrak{l}_{13} \cong \mathfrak{l}_1^{0, 0}$, $\mathfrak{l}_{14} \cong \mathfrak{l}_2^{1,0} \cong \mathfrak{l}_{20}^0$, $\mathfrak{l}_{15}^p \cong \mathfrak{l}_8^{0, 0, p}$, $\mathfrak{l}_{20}^1 \cong \mathfrak{l}_2^{0, 1}$, $\mathfrak{l}_{22}^{q, 0} \cong \mathfrak{l}_8^{0, 1, 0}$, $\mathfrak{l}_{22}^{q, 1} \cong \mathfrak{l}_8^{0, 1, 1}$, $\mathfrak{l}_{23}^p \cong \mathfrak{l}_8^{1, 0, p}$, $\mathfrak{l}_{25}^{p,p,r} \cong \mathfrak{l}_{11}^{0,p,p,r}$, $\mathfrak{l}_{25}^{p,0,r} \cong \mathfrak{l}_{25}^{0,p,r} \cong \mathfrak{l}_{11}^{0, p, 0, r}$,  whose structure equations  in terms of a $J$-adapted basis $(f_i)$  are given in  Table \ref{fig:LCSKT6}.  In particular, $\mathfrak{g}$ admits a  LCSKT structure  $(J, g)$  if and only if $\mathfrak{g}$ is isomorphic to one of the Lie algebras listed in Table $\ref{fig:LCSKT6}.$ \end{corol}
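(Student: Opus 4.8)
The plan is to combine the two structural inputs already on the table: Theorem~\ref{thm:class6cplx}, which lists the five possible shapes (up to an overall scalar) of the matrix $A$ attached to $ad_{e_6}\vert_{\mathfrak n_1}$ when $(\mathfrak g,J)$ admits a complex structure, and Theorem~\ref{thm:caractLCSKT}, which says $\mathfrak g$ carries a twisted SKT $J$-Hermitian metric if and only if $A$ is (i) diagonalizable over $\C$ and (ii) has $\operatorname{Re}(\lambda)\in\{0,\mu\}$ for some $\mu\in\R$ and all $\lambda\in\operatorname{Spec}(A)$. So the proof is a finite case analysis: for each of the five normal forms I go through its free parameters $(p,q,r,s)$ and impose (i) and (ii), recording exactly which parameter specializations survive. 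The output is the list of seven normal forms in the statement; the isomorphism identifications and structure equations are then read off from \cite{fino2021generalized}, where the ambient classification of $6$-dimensional almost abelian Lie algebras with a complex structure was already carried out.

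Concretely I would proceed family by family. For the diagonal form $\operatorname{diag}(p,q,r,s)$: condition (i) is automatic, and (ii) forces the real parts (here the entries themselves, since they are real) to take at most two values, one of which is $0$ — but actually since $A$ is real diagonal the eigenvalues \emph{are} their real parts, so $\{p,q,r,s\}\subseteq\{0,\mu\}$; compatibility with $J$ (the eigenvalues pair up under $J_1$) forces the multiplicities to be even, giving either $A=pI$ (all equal, here relabelled so $\mu=p$, the case $\operatorname{diag}(p,p,p,p)$) or $A=\operatorname{diag}(p,p,0,0)$. For the form with one $2\times2$ rotation-type block $\begin{pmatrix}p&1\\-1&p\end{pmatrix}$ and two real eigenvalues $q,r$: the block contributes the conjugate pair $p\pm i$, so $\operatorname{Re}=p$ with multiplicity two; then (ii) forces $q,r\in\{0,p\}$, and evenness of $J$-pairing among the remaining spectrum forces $q=r$, with the subcases $q=r=p$, $q=r=0$ (then after rescaling so the rotation block is $\begin{pmatrix}0&1\\-1&0\end{pmatrix}$ one gets the $A=\operatorname{diag}$-in-blocks form with a zero block), giving the third, fourth and fifth listed matrices (the general rotation angles $r,s$ appearing in the list come from allowing the overall scalar and reparametrizing the $\pm1$ off-diagonal entries). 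For the two-rotation-block form $\begin{pmatrix}p&1&&\\-1&p&&\\&&q&r\\&&-r&q\end{pmatrix}$: the spectrum is $p\pm i$ and $q\pm ir$, real parts $p$ and $q$; (ii) needs $\{p,q\}\subseteq\{0,\mu\}$, so either $p=q$ (sixth listed matrix, up to rescaling) or one of them vanishes, say $q=0$ (seventh listed matrix). Finally the two genuinely non-semisimple Jordan-type forms $\begin{pmatrix}p&1&&\\0&p&&\\&&p&1\\&&0&p\end{pmatrix}$ and the last $4\times4$ matrix in Theorem~\ref{thm:class6cplx} both fail condition (i) — they are not diagonalizable for any value of $p$ — so they contribute nothing; these must be explicitly discarded to close the case analysis.

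The main obstacle is bookkeeping rather than conceptual: (a) being careful about the "up to a non-zero scalar" ambiguity in $ad_{e_6}$, which is what lets one normalize, e.g., a purely-imaginary-spectrum rotation block to $\begin{pmatrix}0&1\\-1&0\end{pmatrix}$ or absorb one rotation speed, and which is also why the final list is presented with parameters $r,s$ in some slots where Theorem~\ref{thm:class6cplx} had a literal $1$; (b) using that $J_1$ commutes with $A$ to conclude that the $0$-eigenspace and each $\mu$-real-part piece are $J_1$-invariant, hence even-dimensional, which is what kills the asymmetric specializations like $\operatorname{diag}(p,0,0,0)$ or $(q,r)$ with $q\ne r$; and (c) matching each surviving $(a,v,A)$ with the right entry $\mathfrak l_i^{\cdots}$ of \cite{fino2021generalized} and the isomorphisms among them — this is the long identification string in the statement and is essentially a lookup against the ambient classification, combined with the observation that two data sets $(a,v,A)$ and $(a',v',A')$ with $A'$ proportional to $A$ can give isomorphic Lie algebras. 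Once all seven matrices are in hand, the last sentence — that $\mathfrak g$ admits an \emph{LCSKT} (not merely twisted SKT) structure iff it appears in Table~\ref{fig:LCSKT6} — follows from Corollary~\ref{cor-LSKT}: one simply checks in each case whether condition (ii) can be met with $\mu\ne -a/2$ (equivalently whether a genuinely non-closed-in-the-trivial-sense, nonzero $\alpha$ exists), discarding those for which every admissible metric is forced to be SKT, i.e. has $\alpha\wedge H=0$ only for $\alpha=0$.
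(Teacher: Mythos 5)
Your proposal is correct and follows essentially the same route as the paper's proof: combine Theorem~\ref{thm:class6cplx} with Theorem~\ref{thm:caractLCSKT}, discard the two non-diagonalizable Jordan-type forms, impose the condition $\operatorname{Re}(\lambda)\in\{0,\mu\}$ (using $[A,J_1]=0$ and the scalar ambiguity of $ad_{e_6}$) to obtain the seven normal forms, and settle the LCSKT refinement via Corollary~\ref{cor-LSKT} by checking whether $\mu\neq-\frac{a}{2}$ can be arranged. The paper's verification of the converse by exhibiting the standard metric $g=\sum_i (e^i)^2$ is subsumed in your use of the ``if and only if'' of Theorem~\ref{thm:caractLCSKT}.
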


    \begin{proof} The first part of the corollary follows from the fact that among the  five matrices in Theorem \ref{thm:class6cplx}, only the first three are diagonalizable over $\C$, and the condition on the eigenvalues  of  $A$ imposes that the entries of $A$  have the above expression. Conversely, if  the matrix $A$  associated to $ad_{e_6}\vert_{\mathfrak{n_1}}$ has the above form in some $J$-adapted basis $(e_i)$, then we see that the  metric $g = \sum_{i =1}^6 (e^i)^2$ is Hermitian, because the basis is $J$-adapted. Moreover, $A$  is clearly normal in the $J$-adapted basis, so is $g$-normal. Finally, according to Corollary \ref{cor-LSKT}, we can distinguish whether or not there exists a LCSKT structure by examining the real parts of  the eigenvalues of $A$: if one of the eigenvalues of $A$ has a nonzero real part $\mu$, then there exists an LCSKT structure if and only if $\mu \neq - \frac{a}{2}$. \end{proof}

\begin{remark}  In   Table \ref{fig:LCSKT6} we list the almost abelian Lie algebras admitting a twisted SKT structure  and say whether there exists a SKT or LCSKT structure. Each time, an  explicit example of twisted SKT structure $(J, g)$,   is given by
$$ Jf^1 = f^6, Jf^2=f^3, Jf^4=f^5, \quad g = \sum_{i =1}^6 (f^i)^2.
$$
Note that the Lie algebras    can admit  other complex structures  and the  existence of special types of  Hermitian  metrics strongly depend on the complex structure, as we will see in the next section.
\end{remark}

\section{Compatibility with other types of Hermitian metrics}

Let $\mathfrak{g}$ be an almost abelian Lie algebra of real dimension $2n$ endowed with a complex structure  $J$ and denote by $J_1$ the restriction $J\vert_{\mathfrak{n_1}}$.
In this section, we investigate, fixed  the complex structure $J$,  the interplay of the twisted SKT condition with other types of Hermitian metrics.

As a first result, as  a consequence of  the results in Section \ref{sect3},  we  can  prove  the following

\begin{lemm} 
If  $(\frak g, J)$ admits a  twisted SKT metric $g$, and   $(e_1, \ldots, e_{2n})$ is   a  $g$-orthonormal $J$-adapted basis,  then the set  of all  twisted SKT  $J$-Hermitian   metrics  on  $(\frak g, J)$  coincides with the set $\mathcal G$ of  product metrics $g^{h,u} = h + k^u$, where  $h$ is $J_1$-Hermitian metric on $\frak n_1$ such that $ad_{e_{2n}}\vert_{\mathfrak{n}_1}$ is $h$-normal, $u$ is a non-zero vector  in a complement $\frak c$ of $\frak n_1$ and $k^u$  is the metric  on ${\mbox{span}} \langle u, J u \rangle$ such that  the basis  $(u, Ju)$ is  orthonormal. \end{lemm}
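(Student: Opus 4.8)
The plan is to use Theorem \ref{thm:caractLCSKT} as a black box: it already asserts that the twisted SKT $J$-Hermitian metrics are exactly the inner products $g'$ on $\frak g$ compatible with $J$ for which $ad_{e_{2n}}\vert_{\mathfrak{n}_1}$ is $g'$-normal. So the real content of this lemma is purely a linear-algebra decomposition statement: every $J$-Hermitian metric making $ad_{e_{2n}}\vert_{\mathfrak{n}_1}$ normal is automatically a product metric $h + k^u$ of the stated form, and conversely every such product metric is $J$-Hermitian with $ad_{e_{2n}}\vert_{\mathfrak{n}_1}$ normal. I would state this equivalence explicitly at the start of the proof so that the rest reduces to checking the two inclusions $\mathcal G \subseteq \{\text{twisted SKT metrics}\}$ and $\{\text{twisted SKT metrics}\} \subseteq \mathcal G$.

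First I would prove $\mathcal G \subseteq \{\text{twisted SKT}\}$, which is the easy direction. Given $g^{h,u} = h + k^u$ with $h$ a $J_1$-Hermitian metric on $\frak n_1$ making $A := ad_{e_{2n}}\vert_{\mathfrak{n}_1}$ $h$-normal, and $u$ a nonzero vector in a complement $\frak c$ of $\frak n_1$: I would note that $\frak g = \frak n_1 \oplus \mathrm{span}\langle u, Ju\rangle$ is an orthogonal $J$-invariant splitting (here using that $\frak n_1$ is $J$-invariant and $k^u$ is by definition the metric making $(u,Ju)$ orthonormal), so $g^{h,u}$ is $J$-Hermitian; and $A$ being normal for $g^{h,u}$ is equivalent to being normal for $h$ since the two metrics agree on $\frak n_1$. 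Then Theorem \ref{thm:caractLCSKT} gives that $g^{h,u}$ is twisted SKT — but one must first observe that the hypothesis "$(\frak g, J)$ admits a twisted SKT metric" guarantees, via Theorem \ref{thm:caractLCSKT}, that conditions (i) and (ii) on $ad_{e_{2n}}\vert_{\mathfrak{n}_1}$ hold, which is exactly what lets us invoke the "moreover" part of that theorem.

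For the reverse inclusion, let $g'$ be any twisted SKT $J$-Hermitian metric. By Theorem \ref{thm:caractLCSKT}, $ad_{e_{2n}}\vert_{\mathfrak{n}_1}$ is $g'$-normal. Now I would take a $g'$-orthonormal $J$-adapted basis $(e'_1,\ldots,e'_{2n})$ as in Remark \ref{remdaptedbasis}: set $e'_{2n}$ to span $(\frak n_1)^{\perp_{g'}} \cap \ldots$ — more precisely, choose $e'_{2n}$ generating the $g'$-orthogonal complement of $\frak n$, put $e'_1 = -Je'_{2n}$, and complete with a $g'$-orthonormal basis of $\frak n_1$. Writing $h := g'\vert_{\frak n_1}$ and $u := e'_{2n}$, the splitting $\frak g = \frak n_1 \oplus \mathrm{span}\langle u, Ju\rangle$ is $g'$-orthogonal (since $u = e'_{2n}$ and $Ju = -e'_1$ are both $g'$-orthogonal to $\frak n_1$ and $g'$-unit), so $g' = h + k^u$ with $(u,Ju)$ a $g'$-orthonormal basis of its span; moreover $h$ is $J_1$-Hermitian and $A$ is $h$-normal since $g'\vert_{\frak n_1} = h$. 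Hence $g' \in \mathcal G$. The one subtlety I anticipate — and would address carefully — is the role of the ambient basis $(e_1,\ldots,e_{2n})$ fixed in the statement: since $ad_{e_{2n}}\vert_{\mathfrak{n}_1}$ is, by Remark \ref{remdaptedbasis}, independent of the $J$-adapted basis up to a nonzero scalar, normality does not depend on which such basis we use, so "$ad_{e_{2n}}\vert_{\mathfrak{n}_1}$ is $h$-normal" is unambiguous; I would spell this out to make the definition of $\mathcal G$ consistent. This bookkeeping with the scalar ambiguity of $ad_{e_{2n}}\vert_{\mathfrak{n}_1}$, together with checking that $\frak c$ can be taken to be any complement (not necessarily $J$-invariant, since $\mathrm{span}\langle u, Ju\rangle$ is what matters), is the main thing to get right; the rest is a direct unwinding of definitions.
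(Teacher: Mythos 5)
Your proposal is correct and follows essentially the same route as the paper: both directions are reduced to Theorem \ref{thm:caractLCSKT} (whose ``moreover'' part applies because the hypothesis already forces conditions (i)--(ii)), combined with the orthogonal $J$-invariant splitting $\frak g = \frak n_1 \oplus \frak n_1^{\perp}$ that exhibits any twisted SKT metric as $h + k^u$. The only cosmetic difference is your choice $u = e'_{2n}$ rather than the paper's $u = e'_1 \in \frak n$, which is immaterial since $k^{u}=k^{Ju}$ gives the same product metric.
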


 \begin{proof} Note first  that the  metric $g$ can be written as $g^{h, u} = h _0 + k^u,$ with    $h_0$   the restriction of $g$ to $\frak n_1$ and $u = e_1.$  By Section \ref{sect3}    a twisted SKT  $J$-Hermitian metric   on $(\mathfrak{g}, J)$ is completely determined  by its  restriction to $\mathfrak{n}_1$ and  by a vector $u$  in $ \mathfrak{n}$  which is  orthogonal to $\mathfrak{n}_1$, so  if $g'$ is  a twisted SKT  $J$-Hermitian metric then $g'$ can be written as a metric of the form $g^{h,u}$.  Conversely, given   a product metric of the form $g^{h,u} \in {\mathcal G}$,     
by construction  $g^{h,u}$ is compatible with $J$ and  by  Theorem \ref{thm:caractLCSKT}   it is    twisted SKT.  \end{proof}

\begin{remark}  \label{remAu} Denote by  $(a, v, A)$ the algebraic data  associated to $(\frak g, J,g)$ in the basis $(e_i)$, by $(a^u, v^u,A^u)$ the algebraic data  with respect to a $g^{h,u}$-orthonormal $J$-adapted basis $(e'_i)$, i.e. 
\[[Ju, u] = a^u u + v^u,  \quad A^u = ad_{Ju}\vert_{\mathfrak{n}_1},\]
and by  $P$ the change of basis matrix   from $(e_2, \ldots, e_{2n-1})$ to $(e'_2, \ldots, e'_{2n-1})$.
If we write the vector $u$   as $u = ce_1 + w$  (with respect to the  splitting $\mathfrak{n} = \R e_1 \oplus \mathfrak{n_1}$ induced by the basis $(e_i)),$  then   we obtain the relations
$$
a^u = ca,  \quad A^u = cP^{-1}AP,  \quad Pv^u = c^2 v + c(A - a)w. 
$$
Therefore,  if $P$ is the identity matrix, $a$ and $A$ are only changing by a constant non-zero factor $c$. Note that when $g$ and $g^{h, u}$ coincide on $\mathfrak{n}_1$, we may always assume that this is the case by taking $e'_i = e_i$ for  $i = 1, \ldots 2n - 1$.
\end{remark}

\begin{lemm} \label{pps:kahlerinv}
Let $\mathfrak{g}$ be an almost abelian Lie algebra  endowed with a complex structure  $J$ and admitting  a  twisted SKT  $J$-Hermitian metric. 
\begin{enumerate}

\item[(i)] The condition    $A^u \in {\frak {so}} ({\frak n}_1)$, i.e. $A^u$  being  antisymmetric with respect to the restriction $h = g^{h,u} |_{{\frak n}_1}$,   is either  satisfied  by all  $g^{h,u} \in \mathcal G$ or by none.

As a consequence, $(\frak g, J)$  has a    K\"ahler  metric if and  only if  there exists $g^{h,u}  \in \mathcal{G}$, satisfying both   the conditions $A^u \in \mathfrak{so}({\mathfrak n}_1)$ and $v^u \in {\mbox {Im}} (A^u - a^u \, {\mbox {Id}})$. 

\item[(ii)]  The condition $v^u  \in {\mbox {Im}} (A^u - a^u \, {\mbox{Id}})$ is  either satisfied  by  all $g^{h,u} \in \mathcal G$ or by none.

\end{enumerate}

\end{lemm}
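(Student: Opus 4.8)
The strategy is to translate everything into the algebraic data $(a^u, v^u, A^u)$ via the relations collected in Remark \ref{remAu} and the Kähler criterion ``$v = 0$ and $A$ antisymmetric'' recalled after \cite[Lemma 1]{Arroyo_2019}. The key observation is that, for any two metrics $g^{h,u}, g^{h',u'} \in \mathcal G$, the endomorphism $A^u$ is always $cP^{-1}AP$ for some nonzero scalar $c$ and some invertible $P$; in other words, $A^u$ and $A^{u'}$ represent, up to a nonzero scalar, the same endomorphism of $\mathfrak n_1$ expressed in different bases. Since antisymmetry of a normal endomorphism with respect to a compatible inner product is equivalent to all its eigenvalues being purely imaginary (the spectral theorem for normal operators, as already used in the proof of Theorem \ref{thm:caractLCSKT}), and scaling by $c \neq 0$ does not change whether the eigenvalues lie on the imaginary axis, the property ``$A^u$ antisymmetric with respect to $h = g^{h,u}|_{\mathfrak n_1}$'' depends only on $(\mathfrak g, J)$ and not on the choice of $g^{h,u} \in \mathcal G$. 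This proves the first assertion of $(i)$.

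For the second assertion of $(i)$: if $(\mathfrak g, J)$ carries a Kähler metric $g_K$, then $g_K$ is in particular twisted SKT (Kähler implies $H = 0$, hence $dH = 0 = \alpha \wedge H$), so $g_K \in \mathcal G$; and the Kähler criterion forces, in a $g_K$-orthonormal $J$-adapted basis, $A$ antisymmetric and $v = 0$, the latter being a (degenerate) instance of $v^u \in {\rm Im}(A^u - a^u\,{\rm Id})$. Conversely, suppose some $g^{h,u} \in \mathcal G$ satisfies $A^u \in \mathfrak{so}(\mathfrak n_1)$ and $v^u \in {\rm Im}(A^u - a^u\,{\rm Id})$. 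Then, writing $v^u = (A^u - a^u\,{\rm Id})\xi$ for some $\xi \in \mathfrak n_1$, one performs the change of basis $e_1 \mapsto e_1 + \xi$ on $\mathfrak n$ (keeping $\mathfrak n_1$ and $e_{2n}$): this is exactly the $w$-shift from Remark \ref{remAu}, and with $c = 1$ the new $v$-datum becomes $v^u + (A^u - a^u)(-\xi) = 0$ while $A^u$ is unchanged, hence still antisymmetric. Rescaling the metric on $\mathfrak n_1$ is unnecessary since antisymmetry is already in force; the resulting $J$-Hermitian metric has $v = 0$ and $A$ antisymmetric, so by the Kähler criterion it is Kähler.

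For $(ii)$: one must show the condition $v^u \in {\rm Im}(A^u - a^u\,{\rm Id})$ is basis-independent within $\mathcal G$. Using $A^u = cP^{-1}AP$, $a^u = ca$, and $Pv^u = c^2 v + c(A - a)w$ from Remark \ref{remAu}, compute
\[
(A^u - a^u\,{\rm Id})\,\xi = c\,P^{-1}(A - a\,{\rm Id})P\,\xi,
\]
so ${\rm Im}(A^u - a^u\,{\rm Id}) = P^{-1}\big({\rm Im}(A - a\,{\rm Id})\big)$, and meanwhile $v^u = P^{-1}\big(c^2 v + c(A - a\,{\rm Id})w\big)$. Hence $v^u \in {\rm Im}(A^u - a^u\,{\rm Id})$ iff $c^2 v + c(A - a\,{\rm Id})w \in {\rm Im}(A - a\,{\rm Id})$, iff $v \in {\rm Im}(A - a\,{\rm Id})$ (since $c \neq 0$ and the $(A - a\,{\rm Id})w$ term already lies in that image). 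This last condition refers only to the starting data $(a, v, A)$, so it holds for all of $\mathcal G$ or for none.

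**Main obstacle.** The only genuinely delicate point is keeping the bookkeeping of Remark \ref{remAu} straight — in particular that the $w$-shift used in the converse of $(i)$ is legitimate (it is, since it is precisely the change of $J$-adapted basis with $P = {\rm Id}$, $c = 1$, $u = e_1 + w$ contemplated there) and that this shift can be performed \emph{after} arranging antisymmetry without disturbing it, because the shift does not touch $A^u$. Everything else is the spectral-theorem remark for normality plus the linear-algebra identity ${\rm Im}(A^u - a^u\,{\rm Id}) = P^{-1}{\rm Im}(A - a\,{\rm Id})$, both routine.
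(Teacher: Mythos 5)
Your proof is correct and follows essentially the same route as the paper: a spectral-theorem/normality argument to make the antisymmetry of $A^u$ independent of the choice of $g^{h,u}\in\mathcal G$ (the paper phrases it via $\mathrm{Tr}(A^u)=c\,\mathrm{Tr}(A)$, you via purely imaginary spectrum, which is equivalent), the shift $\tilde u = u-\xi$ with $P=\mathrm{Id}$ to kill $v^u$ and produce a K\"ahler metric, and the identities $\mathrm{Im}(A^u-a^u\,\mathrm{Id})=P^{-1}\mathrm{Im}(A-a\,\mathrm{Id})$, $Pv^u=c^2v+c(A-a)w$ for (ii). Only a harmless sign slip: the shift you describe as $e_1\mapsto e_1+\xi$ is really $u=e_1-\xi$ (i.e.\ $w=-\xi$), which is the formula you actually use.
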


\begin{proof}
We know that  $A^u$ is normal  with respect to $h = g^{h,u}  |_{{\frak n}_1}$, so  by using the spectral theorem and the description of the eigenvalues of $A^u$ from Theorem \ref{thm:caractLCSKT}, we have that   $A^u  \in {\frak {so}} ({\frak n}_1)$ if and only if  ${\mbox {Tr}} (A^u) = 0$.   Since  ${\mbox {Tr}} (A^u) = c {\mbox {Tr}} (A)$,   the first part of  $(i)$  follows.

For  the second part of (i)  recall that by \cite[Lemma 3.6]{fino2021generalized} a metric $g^{h,u} \in \mathcal{G}$ is K\"ahler iff $A^u$ is skew-symmetric  with respect to $h$  and $v^u = 0$.   So  $(\frak g, J)$ is K\"ahler if and only if there exists $g^{h,u}$ such that   $A^u$ is skew-symmetric with respect to $h$  and $v^u = 0$.   We can prove that  this is  equivalent to show that there exists  $u$ such that   $v^u \in Im( A^u  - a {\mbox{Id}})$.  Indeed, let $z$ be such that $v^u = (A^u - a {\mbox {Id}})z$, then for $\tilde u = u  - z$ and $P^u = Id$ on $\mathfrak{n}_1$ we  get $v^{\tilde u} = v^u  - (A^u - a^u Id)z = 0$, so $g^{h, \tilde{u}}$ is K\"ahler.

To  prove  (ii)  it is sufficient to show that, fixed $g \in \mathcal{G}$, $v \in {\mbox {Im}}  (A - aId)$ if and only if  $v^u \in {\mbox {Im}} (A^u - a^u Id)$, for every $g^{h,u } \in\mathcal{G}$.  By Remark  \ref{remAu}, $Im(A^u - a^u Id) = P^{-1} Im(A - aId)$ and $Pv^u = c^2v + (A - a Id)w$, so $v^u \in {\mbox {Im}}  (A^u - a^uId)$ iff $v \in  {\mbox {Im}}  (A - aId)$. Thus  either $v^u$ belongs to ${\mbox {Im}}  (A^u - a^uId)$, for every $g^{h,u } $,  or for none and (ii) follows.
 \end{proof}

By  \cite{Alexandrov, Ferreira}  a   Hermitian metric  $g$ on  an almost abelian Lie algebra  which is  both  twisted SKT and balanced is  K\"ahler.
More in general,  by  \cite[Theorem 3.6]{finoBalanced}    if  $( \frak g,J)$  admits an SKT metric and a balanced metric, then it  has a K\"ahler metric as well. In a similar way we can prove the following

\begin{prop}
Let $\mathfrak{g}$ be an almost abelian Lie algebra equipped with a complex structure $J$. If  $(\frak g, J)$ has  a balanced metric and a twisted SKT  metric, then there exists a K\"ahler metric.
\end{prop}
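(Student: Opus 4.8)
First I would extract the algebraic meaning of the two hypotheses. Let $g_0$ be the balanced metric in the statement, and recall that a Hermitian almost abelian Lie algebra, written as $\mathfrak g(a,v,A)$ in a $J$-adapted basis orthonormal for the metric, is balanced if and only if $v=0$ and $tr(A)=0$. Thus in a $g_0$-orthonormal $J$-adapted basis $(e_i)$ one has $\mathfrak g=\mathfrak g(a,v,A)$ with $v=0$ and $tr(A)=0$, where $A=ad_{e_{2n}}|_{\mathfrak n_1}$. By hypothesis $(\mathfrak g,J)$ also admits a twisted SKT metric, so the set $\mathcal G$ of twisted SKT $J$-Hermitian metrics is non-empty and, by Theorem~\ref{thm:caractLCSKT}, $A$ is diagonalizable over $\C$ with $Re(\lambda)\in\{0,\mu\}$ for every $\lambda\in {\rm Spec}(A)$ and some $\mu\in\R$. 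Since $tr(A)$ is a real number equal to $\sum_\lambda\lambda$, the condition $tr(A)=0$ forces $\sum_\lambda Re(\lambda)=0$, and hence $Re(\lambda)=0$ for every $\lambda\in {\rm Spec}(A)$.

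Next I would fix an arbitrary $g'\in\mathcal G$ with algebraic data $(a',v',A')$ in a $g'$-orthonormal $J$-adapted basis $(e'_i)$, and check that $g'$ satisfies both conditions appearing in Lemma~\ref{pps:kahlerinv}\,(i). Since $e'_1$ and $e_1$ each span a one-dimensional complement of $\mathfrak n_1$ in the abelian ideal $\mathfrak n$, I can write $e'_1=c\,e_1+w$ with $c\neq 0$ and $w\in\mathfrak n_1$; then $e'_{2n}=Je'_1=c\,e_{2n}+Jw$, and since $Jw\in\mathfrak n_1\subset\mathfrak n$ we have $ad_{Jw}|_{\mathfrak n}=0$, so $A'=ad_{e'_{2n}}|_{\mathfrak n_1}=c\,A$ and in particular $tr(A')=c\,tr(A)=0$. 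As $g'\in\mathcal G$ makes $A'$ normal with respect to $g'|_{\mathfrak n_1}$, the argument in the proof of Lemma~\ref{pps:kahlerinv}\,(i) — the spectral theorem together with the description of ${\rm Spec}(A')$ from Theorem~\ref{thm:caractLCSKT} — gives $A'\in\mathfrak{so}(\mathfrak n_1)$. For the remaining condition I would run the key bracket computation: using $v=0$ and that $\mathfrak n$ is abelian,
\[
[e'_{2n},e'_1]=c^2[e_{2n},e_1]+c[e_{2n},w]=c^2 a\,e_1+c\,Aw=c a\,e'_1+c\,(A-a\,{\rm Id})w
\]
(the last equality uses $e_1=c^{-1}(e'_1-w)$), so $a'=ca$ and $v'=c\,(A-a\,{\rm Id})w\in {\rm Im}(A-a\,{\rm Id})={\rm Im}(A'-a'\,{\rm Id})$. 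Hence both hypotheses of Lemma~\ref{pps:kahlerinv}\,(i) are met by $g'\in\mathcal G$, and that lemma then produces a Kähler $J$-Hermitian metric on $\mathfrak g$.

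The point that will need care is that the balanced metric $g_0$ need not itself be twisted SKT (its associated $A$ need not be $g_0$-normal), so one cannot directly quote the result of \cite{Alexandrov,Ferreira} that a metric which is simultaneously balanced and twisted SKT is Kähler; the argument instead transports the two identities $v=0$ and $tr(A)=0$ extracted from balancedness onto the data of a genuine twisted SKT metric, and then invokes the description of the Kähler metrics inside $\mathcal G$ from Lemma~\ref{pps:kahlerinv}. (For $2n\le 4$ the statement is trivial: in a $g_0$-orthonormal $J$-adapted basis an $A$ commuting with $J_1$ and having vanishing trace is already skew-symmetric, so $g_0$ itself is Kähler.)
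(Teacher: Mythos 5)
Your argument is correct and follows essentially the same route as the paper: the Lee form expression forces $\mathrm{Tr}(A)=0$ from the balanced metric, and since $A$ is determined by $J$ up to a nonzero scalar, any twisted SKT metric then has $A$ skew-symmetric, after which a K\"ahler metric is produced by adjusting the transversal direction. The only difference is that you make the last step explicit inside the paper's own framework — the bracket computation giving $v'\in \mathrm{Im}(A'-a'\,\mathrm{Id})$ plus Lemma \ref{pps:kahlerinv}\,(i) — whereas the paper delegates that step to the construction of \cite{finoBalanced}, so your version is a self-contained (and correct) elaboration of the same proof.
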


\begin{proof}
Recall that by  \cite[Theorem 3.6]{finoBalanced}  the expression  of the Lee form   $\theta$ in terms of a $J$-adapted basis  $(e_i)$  is given by
\begin{equation} \label{exprtheta} \theta = - Tr(A) e^{2n} + (Jv)^\flat.\end{equation}
In a similar way as in the proof of Theorem 3.6 in \cite{finoBalanced}  we can show that  the existence of   a balanced metric $b$  and a twisted SKT metric  $g$ implies that  ${\mbox {Tr}}(A) = 0$, since as an endomorphism $A$  depends  only on $J$,  up to a non-zero scalar. Therefore  $A$ is skew-symmetric with respect to $g$ and we can proceed  as in \cite{finoBalanced}   to construct  a K\"ahler metric by combining  $b$ and $g$.

\end{proof}

\begin{remark} \label{remdtheta} Using that  $$d\theta(X, Y) = - \theta([X, Y]), \quad \forall X, Y \in \frak g,$$   and that $\frak n$ is an abelian ideal,  we obtain
$$d\theta = e^{2n} \wedge (J A^T v)^\flat,$$ 
where $A^T$ is the transpose of $A$ with respect to $g$, $(J A^T v)^\flat$ denotes the dual form of $J A^T v$ with respect to the metric $g$ and   $(e_i)$ is an   orthonormal $J$-adapted basis.
Indeed,
  $d\theta (X, Y)= 0$, for every $X, Y   \in \mathfrak{n}$ and  $d(e^{2n}) = 0$.  Therefore, by \eqref{exprtheta} we have 
\[d\theta(e_{2n}, X) =  g(Jv, AX) = (J A^T v)^\flat(X), \quad X \in \frak g.\]
\end{remark}

Locally conformally balanced  (LCB) metrics on almost abelian Lie algebras have been studied in \cite{Paradiso}. Using Remark \ref{remdtheta}  we can prove the following

\begin{prop}\label{prop:lcb} Let $\mathfrak{g}$ be an almost abelian Lie algebra equipped with a complex structure $J$ and admitting a  twisted SKT  $J$-Hermitian metric $g$.  Then the condition
\begin{equation}\label{eq:24NEW}
\{ b v^u + (A^u - a^u) x \mid b \in \R \setminus \{0 \}, x \in \mathfrak n_1 \} \cap  \ker({(A^u)}^T)   \neq \{0\} 
\end{equation}
is either satisfied  by  every $g^{h,u } \in\mathcal{G}$ or  by none. Moreover, there exists a metric $g^{h,u}  \in \mathcal{G}$ which is also LCB if and only if  either $v^u \in {\mbox {Im}} (A^u -a^u {\mbox {Id}})$ or \eqref{eq:24NEW} holds.

\end{prop}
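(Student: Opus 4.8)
The strategy is to translate the LCB condition $d\theta = 0$ into a statement about the algebraic data $(a^u, v^u, A^u)$ using Remark \ref{remdtheta}, and then to show that the resulting condition is invariant under change of metric in $\mathcal G$ by the same mechanism used in Lemma \ref{pps:kahlerinv}. First I would recall that by Remark \ref{remdtheta}, for a $g^{h,u}$-orthonormal $J$-adapted basis the differential of the Lee form is $d\theta = e^{2n} \wedge (J (A^u)^T v^u)^\flat$, so $g^{h,u}$ is LCB precisely when $(A^u)^T v^u = 0$, i.e. $v^u \in \ker((A^u)^T)$. Now I would observe two ways this can be achieved within $\mathcal G$: either $v^u$ already lies in $\ker((A^u)^T)$, or we can modify $u$ to kill the non-$\ker((A^u)^T)$ part of $v^u$. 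Concretely, replacing $u$ by $\tilde u = bu - x$ for suitable $b \in \R\setminus\{0\}$, $x \in \mathfrak n_1$, and taking $P = \mathrm{Id}$ on $\mathfrak n_1$, Remark \ref{remAu} gives $v^{\tilde u} = b^2 v^u + b(A^u - a^u\,\mathrm{Id})x$ (up to relabelling the scalar), which up to an overall nonzero factor is $b v^u + (A^u - a^u\,\mathrm{Id})x$. Hence some metric in $\mathcal G$ with the same restriction to $\mathfrak n_1$ is LCB if and only if the affine-type set $\{ bv^u + (A^u - a^u)x \mid b \neq 0, x \in \mathfrak n_1\}$ meets $\ker((A^u)^T)$ nontrivially, and combining with the case $v^u \in \ker((A^u)^T)$ — which, when $v^u \neq 0$, is the $x=0$ instance, and when $v^u = 0$ is automatic — one sees that LCB is attainable iff $v^u \in {\mbox {Im}}(A^u - a^u\,\mathrm{Id})$ (take $x$ with $(A^u-a^u)x = -v^u$, landing at $0 \in \ker((A^u)^T)$) or \eqref{eq:24NEW} holds.

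For the invariance statement, I would argue exactly as in part (ii) of Lemma \ref{pps:kahlerinv}. Fix two metrics $g, g^{h,u} \in \mathcal G$ and let $P$ be the change-of-basis matrix on $\mathfrak n_1$. By Remark \ref{remAu} we have $A^u = cP^{-1}AP$, $a^u = ca$, and $Pv^u = c^2 v + c(A - a\,\mathrm{Id})w$ for the appropriate $c \neq 0$ and $w \in \mathfrak n_1$. Since conjugation by $P$ and scaling by $c$ carry $\ker(A^T)$ to $\ker((A^u)^T)$ and $\mathrm{Im}(A - a\,\mathrm{Id})$ to $\mathrm{Im}(A^u - a^u\,\mathrm{Id})$ (here one must be careful that $(A^u)^T$ is the transpose with respect to $h$, not a naive matrix transpose, but normality of $A^u$ and the fact that both $A$ and $A^u$ are normal for their respective metrics — so their adjoints are polynomials in themselves — makes the kernels of the adjoints correspond under $P$), the set appearing in \eqref{eq:24NEW} for $(a^u, v^u, A^u)$ is $P^{-1}$ applied, up to scaling the parameter $b$, to the corresponding set for $(a, v, A)$ plus a correction term $(A^u - a^u)(P^{-1}w)$ which can be absorbed into the $x$-variable. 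Hence \eqref{eq:24NEW} holds for one metric in $\mathcal G$ if and only if it holds for all, and similarly $v^u \in \mathrm{Im}(A^u - a^u\,\mathrm{Id}) \Leftrightarrow v \in \mathrm{Im}(A - a\,\mathrm{Id})$ by part (ii) of Lemma \ref{pps:kahlerinv}.

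The main obstacle I anticipate is bookkeeping around the adjoint $(A^u)^T$: this is the $h$-adjoint, and when we change the metric on $\mathfrak n_1$ the notion of transpose changes too, so the naive identity "$P$ conjugates $A^T$ to $(A^u)^T$" is false in general. The clean way around it is to use that $A^u$ is $h$-normal (Theorem \ref{thm:caractLCSKT}), so its $h$-adjoint is a polynomial $Q(A^u)$ with $Q$ sending each eigenvalue to its conjugate — exactly the argument already used in the proof of Theorem \ref{thm:caractLCSKT} — and the same polynomial $Q$ (depending only on the spectrum, which is conjugation-invariant) realizes $A^T = Q(A)$ for the metric $g$. Then $\ker((A^u)^T) = \ker(Q(A^u)) = \ker(Q(P^{-1}AP)) = P^{-1}\ker(Q(A)) = P^{-1}\ker(A^T)$, and the correspondence goes through cleanly. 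A second, minor, point is to check that the case distinction in the final "if and only if" is exhaustive: when $v^u = 0$ both alternatives hold trivially ($0 \in \mathrm{Im}(A^u - a^u\,\mathrm{Id})$ and $g^{h,u}$ is itself LCB), and when $v^u \neq 0$ one verifies that "$v^u \in \mathrm{Im}(A^u - a^u\,\mathrm{Id})$ or \eqref{eq:24NEW}" is precisely the condition that the affine set above meets $\ker((A^u)^T)$, which is what LCB-attainability within $\mathcal G$ means.
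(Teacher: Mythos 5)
Your proposal is correct and follows essentially the same route as the paper's proof: you translate LCB via Remark \ref{remdtheta} into $(A^u)^T v^u = 0$, prove invariance of \eqref{eq:24NEW} from the transformation rules of Remark \ref{remAu} together with the normality/Lagrange-polynomial identification $\ker((A^u)^T) = P^{-1}\ker(A^T)$, and settle attainability by modifying $u$ with $P = \mathrm{Id}$, exactly as in the paper (which merely organizes the change of metric through the intermediary $g^{h,e_1}$ rather than in one step).
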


\begin{proof}  The  metric $g$ can be written as  the product $g^{h_0, e_1} = h_0 + k^{e_1}$, where   $h_0 =g \vert_{\frak n_1}$ and $(e_i)$ is a $g$-orthonormal $J$-adapted basis.
To prove the first part of the statement,  it is sufficient to show that, if the condition \eqref{eq:24NEW} is satisfied by $g$,  then it is satisfied  also by every  $g^{h,u} \in \mathcal G$.  We will  first consider the  two particular cases:  $g^{h,  e_1 }$  and $g^{h_0, u}$.   If we   fix  $u = e_1$, we can show that   $\ker((A^{e_1})^T) = P^{-1} \ker(A^T)$, where $(A^{e_1})^T$  and $A^T$ denote respectively  the transpose of $A^{e_1}$ with respect to $h$ and the transpose  of $A$ with respect to $h_0$.   Indeed, by definition, $h$ is a  Hermitian metric on ${\frak n}_1$  such that $A^{e_1}$ is $h$-normal. Denote by $Q$ the Lagrange interpolation polynomial which sends each eigenvalue of $A^{e_1}$ (or equivalently $A$) on its conjugate, as $A^{e_1}$ is $h$-normal, we may simultaneously diagonalize $A^{e_1}$ and $(A^{e_1})^T$, which implies that $(A^{e_1})^T = Q(A^{e_1})$ and for the same reason $A^T = Q(A)$. But then $P v^{e_1} = v$, and $\ker((A^{e_1})^T) = \ker(Q(A^{e_1})) = \ker(P^{-1}Q(A)P) = P^{-1}\ker(A^T)$. From the fact that ${\mbox{Im}}(A^{e_1} - a^{e_1}{\mbox{Id}}) = ¨P^{-1} {\mbox{Im}}(A - a{\mbox{Id}})$, we see that as $P$ is invertible the first part of the statement is true in this particular case.
Secondly, if   we consider $g^{u, h_0}$, i.e. if we fix $h_0$,  we may assume that $P = {\mbox{Id}}$. Then, $b v^u + (A^u - a^u{\mbox{Id}})x = b c^2 v + (A - a \, {\mbox{Id}}) (b c w + c x)$,  for every  $b \in \R\backslash\{0\},$   and for every  $x \in \mathfrak{n}_1$, and the first part of the statement follows  also in this other case. 
Now, let $g^{h, u }= h + k^u$ be any element of $\mathcal{G}$, then by considering the intermediary metric $g^{h, e_1}$,  we have  that the condition \eqref{eq:24NEW}
is satisfied by $g = g^{h_0, e_1}$ if and only if  \eqref{eq:24NEW} holds for every  $g^{h, e_1} \in \mathcal G$ or equivalently  if and only  \eqref{eq:24NEW}  it is satisfied by $g^{h, u}$ and the first part of the statement follows.

For the second part we  use    that   a metric  $g^{h,u} \in \mathcal{G}$  is LCB if and only if ${(A^u)}^T v ^u= 0$ (see Remark  \ref{remdtheta}).
To construct a  metric $g^{h,u} \in \mathcal G$ such that  $ {(A^u)}^T v ^u = 0$ we can proceed in  the following way. Let  $g = g^{h_0, e_1} \in \mathcal G$. If $v \in {\mbox{Im}} (A - a {\mbox{Id}})$ (see  the proof of Lemma \ref{pps:kahlerinv}), then there exists $u$ such that for $h = h_0$, $ (A^u)^T v^u = 0$ and $g^{h_0,u}$ is LCB.  If $v \notin {\mbox{Im}}(A - a{\mbox{Id}})$, then for $u \in \mathfrak{n}$, taking $g^u = g$ on $\mathfrak{n}_1$ and $P = {\mbox{Id}}$, since $v^u = c^2 v + c (A - a Id) w$, $ {(A^u)}^T v ^u = 0$ if and only if $$
\{ b v + (A - a) x \mid b \in \R \setminus \{0 \}, x \in \mathfrak n_1 \} \cap  \ker({(A^u)}^T)   \neq \{0\}. \label{eq:24}
$$ Thus, if the above condition is satisfied then we can find $u$ such that $g^{h_0,u}$  is LCB. Conversely, if $g^{h,u}$ is LCB then the above condition is satisfied by $g^{h_0,u}$, so by $g$ according to the first part of the statement.
\end{proof}

This allows us to construct an example of   unimodular non-K\"ahler  almost abelian  Lie algebra admitting a  Hermitian structure which is both  LCSKT  and  LCB. Recall that a Lie algebra  $\frak g$ is unimodular if ${\mbox{Tr}} (ad_x) =0$, for every $x \in \frak g$. By \cite{Milnor} this is  necessary condition for the associated Lie group to admit lattices.

\begin{ex} {Consider the $8$-dimensional almost abelian Lie algebra $\frak g(a, v, A)$, with $a = 1, v = e_6$, $$  A = 
\left (\begin{array}{cccccc}
r & -s & 0 & 0 & 0 & 0\\
s & r & 0 & 0 & 0 & 0\\
0 & 0 & r & s' & 0 & 0 \\
0 & 0 & -s' & r & 0 & 0 \\
0 & 0 & 0 & 0 & 0 & 0 \\
0 & 0 & 0 & 0 & 0 & 0 
\end{array} \right),
$$
the  complex structure $J$ defined by $Je_1 = e_8, Je_2 = e_3, Je_4 = e_5, Je_6 = e_7$ and  $\frak n_1 = {\mbox{span}} \langle e_2, e_3, e_4, e_5, e_6, e_7 \rangle$. Note that if $r = -\frac{1}{4}$  the Lie algebra is unimodular. Since  $v \in Im(A-a \, {\mbox{Id}})$,   the metric $g =\sum_{i}^8  (e^i)^2$ is both  LCSKT  and LCB. 
Moreover,  every  twisted SKT metric is  of the form   $g^{h,u}= h + k^u$,  where  $u = ce_1 + w \in \mathfrak{n}\backslash\mathfrak{n}_1$,   $A^u = ad_{Ju}$  is  $h$-normal. and  $h$  is an arbitrary $J_1$-hermitian metric  if $(r,s) =(0,0)$ and
$$h = t((e^2)^2 + (e^3)^2) + u((e^4)^2 + (e^5)^2) + x(e^6)^2 + 2y e^6 e^7 + z(e^7)^2,  \quad t, u > 0,  \, xz > y^2, \, x + z \geq 0$$
if $(r,s) \neq (0,0)$.
Among these metrics, those which also are LCB are precisely the metrics such that $(A^u)^T v^u = 0$, where the transpose is with respect to $g^u$, or equivalently such that   $w \in {\mbox {span}}  \langle e_6, e_7 \rangle$.}
Note that for $r \neq 0$,  the Lie algebra does not admit any K\"ahler metric  since the eigenvalues of $A$ are not imaginary.
  \end{ex}

We investigate now the existence of metrics which are both twisted SKT and Bismut Ricci flat. Recall that  the Ricci form  $\rho^B$ of the Bismut connection is given in general by 
\[\rho^B(X, Y) = \frac{1}{2} \sum_{i=1}^{2n} g(R^B(X, Y)e_i, Je_i) \]
where $(e_i)$ is any orthonormal basis.  For  a Hermitian almost abelian  Lie algebra $(\frak g (a, v, A), J, g)$  $\rho^B$ has the following expression
\[\rho^B = - (a^2 - \frac{1}{2}a Tr(A) + \Vert v \Vert^2) e^1 \wedge e^{2n} - (A^Tv)^\flat \wedge e^{2n},\]
where  $(e_i)$ is an orthonormal $J$-adapted basis and $x^\flat = g(x, \cdot)$
(see \cite[Prop. 4.8]{Arroyo_2019}).
Therefore, since $A^Tv$ and $e^1$ are orthogonal, the vanishing of $\rho^B$ is equivalent to the conditions
\[ a^2 - \frac{1}{2}a \, {\mbox {Tr(}}A) = -\Vert v \Vert^2,  \quad A^T v = 0.\]

\begin{remark} \label{remrhob=0}
Note that by Remark \ref{remAu}     we have  \[(a^u)^2 - \frac{1}{2} a^u Tr(A^u) = c^2(a^2 - \frac{1}{2}aTr(A)), \quad \forall g^{h,u} \in \mathcal G \]
and so if $\rho^B=0$  either 
  $(a^u)^2 - \frac{1}{2}a^uTr(A^u)=0$  for every $g^{h, u} \in \mathcal G$  or   $(a^u)^2 - \frac{1}{2}a^uTr(A^u)<0$  for every $g^{h,u} \in \mathcal{G}$.
\end{remark}

\begin{remark}
Let $g$ be a twisted SKT metric. By the  spectral theorem and the condition  ${\mbox {Re}} ( {\rm {Spec}} (A)) \subset \{0, \mu\}$,  it follows  that   $A \in \mathfrak{so(n_1)}$ iff ${\mbox {Tr}}(A) = 0$.  Therefore, if ${\mbox {Tr}} (A) = 0$,  the vanishing of  $\rho^B$ implies  $a^2 + \Vert v \Vert^2 = 0$,  and consequently that   $(J, g)$ has to be K\"ahler.
\end{remark}

\begin{prop} \label{pps:brflatinv}  Let $\frak g$ be an almost abelian Lie algebra endowed with a complex structure $J$.  Then there exists  a  Bismut Ricci flat  twisted   SKT metric  iff either 
\begin{equation} \label{cond1rhoB} a^2 - \frac{1}{2} a  {\mbox {Tr}} (A) = 0, \quad  v  \in  {\mbox {Im}}  (A -a  \,  {\mbox {Id}}) \end{equation}
or
\begin{equation} \label{cond2rhoB} a^2 - \frac{1}{2} a  {\mbox {Tr}} (A) < 0,  \quad \{ \lambda v + (A - a) x \mid \lambda \in \R \setminus \{0 \}, x \in \mathfrak n_1 \} \cap  \ker (A^T)   \neq \{0\}. \end{equation}

In particular,  every  Bismut Ricci flat  LCSKT metric is LCB. 
\end{prop}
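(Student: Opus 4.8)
The plan is to prove Proposition \ref{pps:brflatinv} in two parts: first establishing the equivalence stated in the display, and then deducing the final sentence about LCSKT metrics being LCB.

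For the equivalence, I would start from the characterization already recorded above: for a twisted SKT Hermitian almost abelian Lie algebra $(\frak g(a,v,A),J,g)$, the Bismut Ricci form vanishes exactly when $a^2 - \tfrac{1}{2}a\,{\mbox{Tr}}(A) = -\Vert v\Vert^2$ and $A^Tv = 0$. Since $\Vert v \Vert^2 \geq 0$, the quantity $a^2 - \tfrac12 a\,{\mbox{Tr}}(A)$ is necessarily $\leq 0$ for any Bismut Ricci flat metric, and by Remark \ref{remrhob=0} its sign (zero vs.\ strictly negative) is the same for every $g^{h,u} \in \mathcal{G}$, so we may split into the two cases appearing in the statement. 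In the case $a^2 - \tfrac12 a\,{\mbox{Tr}}(A) = 0$: then we need $\Vert v\Vert = 0$, i.e.\ $v=0$ for the chosen metric, which after passing through $\mathcal{G}$ becomes the assertion that some $g^{h,u}$ has $v^u = 0$; by the argument in the proof of Lemma \ref{pps:kahlerinv}(ii) and Remark \ref{remAu} (using $Pv^u = c^2 v + c(A-a)w$ and the ability to choose $w$), having $v^u = 0$ for some $u$ is equivalent to $v \in {\mbox{Im}}(A - a\,{\mbox{Id}})$, and when $v^u=0$ the condition $A^Tv^u = 0$ is automatic. This gives \eqref{cond1rhoB}. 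In the case $a^2 - \tfrac12 a\,{\mbox{Tr}}(A) < 0$: here $v \neq 0$ necessarily, and one must realize $A^Tv^u=0$ together with the norm condition. Following the mechanism in the proof of Proposition \ref{prop:lcb} verbatim — fix $h_0 = g|_{\frak n_1}$, $P = {\mbox{Id}}$, write $v^u = c^2 v + c(A-a\,{\mbox{Id}})w$ — the equation $(A^u)^Tv^u = A^Tv^u = 0$ (note $(A^u)^T = Q(A^u)$ with $Q$ the same Lagrange polynomial, so $\ker((A^u)^T) = \ker(A^T)$ after identifying via $P={\mbox{Id}}$) reduces to requiring that the affine set $\{\lambda v + (A-a)x\}$ meets $\ker(A^T)$ nontrivially, which is \eqref{cond2rhoB}. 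One also has to check the norm condition $\Vert v^u\Vert^2 = -( (a^u)^2 - \tfrac12 a^u{\mbox{Tr}}(A^u))$ can be arranged: since $(a^u)^2 - \tfrac12 a^u {\mbox{Tr}}(A^u) = c^2(a^2 - \tfrac12 a{\mbox{Tr}}(A))$ scales with $c^2$ while $\Vert v^u\Vert^2$ can be rescaled by $h$ on $\frak n_1$ (we are free to scale $h$), the strict negativity is exactly what lets us balance the two sides. I expect the bookkeeping of how $v^u$, $A^u$, $a^u$ and the metric $h$ all transform under the change of $u$ and $h$ — making sure the norm equation and the kernel condition can be satisfied \emph{simultaneously} — to be the main obstacle; the key point is that the kernel condition is scale-invariant in $v$ while the norm condition is not, so once the kernel condition holds we rescale $h$ (equivalently pick $\lambda$) to fix the norm.

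For the final sentence: suppose $g$ is a Bismut Ricci flat LCSKT metric. By the above, $A^Tv = 0$ in particular (this holds in both cases \eqref{cond1rhoB} and \eqref{cond2rhoB}: in the first, $v=0$; in the second it is part of the condition for the realizing metric, and since $g$ itself is the Bismut Ricci flat metric we have $A^Tv=0$ directly from $\rho^B=0$). But by Remark \ref{remdtheta}, $d\theta = e^{2n}\wedge (JA^Tv)^\flat$, so $A^Tv = 0$ immediately gives $d\theta = 0$, i.e.\ $g$ is LCB. This last step is essentially immediate once the structure of $\rho^B$ and $d\theta$ are in hand.

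Throughout, I would lean on the fact — repeatedly used in this section — that as an endomorphism $A = ad_{e_{2n}}|_{\frak n_1}$ depends on $J$ only up to a nonzero scalar, so conditions like "$v \in {\mbox{Im}}(A - a\,{\mbox{Id}})$" or "the affine set meets $\ker(A^T)$" are intrinsic to $(\frak g, J)$ and independent of the $J$-adapted basis; this is what makes the statement "there exists a Bismut Ricci flat twisted SKT metric iff \dots" well-posed. I would present the proof in the same style as the proofs of Lemma \ref{pps:kahlerinv} and Proposition \ref{prop:lcb}, citing Remark \ref{remAu} and Remark \ref{remrhob=0} for the transformation rules and the sign dichotomy, and Remark \ref{remdtheta} for the LCB consequence.
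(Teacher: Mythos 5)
Your proposal is correct and follows essentially the same route as the paper: the characterization of $\rho^B=0$ for twisted SKT metrics, the sign dichotomy from Remark \ref{remrhob=0}, the argument of Lemma \ref{pps:kahlerinv} to reduce the case $a^2-\tfrac{1}{2}a\,\mathrm{Tr}(A)=0$ to $v^u=0$, and the mechanism of Proposition \ref{prop:lcb} to realize $(A^u)^Tv^u=0$ in the strictly negative case. The only cosmetic difference is that you balance the norm equation by rescaling $h$ (equivalently the kernel element), whereas the paper rescales $u\mapsto su$ and uses $a^{\tilde u}=sa^u$, $A^{\tilde u}=sA^u$, $v^{\tilde u}=s^2v^u$ — these are equivalent homogeneity tricks — and your direct deduction of the LCB statement from $A^Tv=0$ via Remark \ref{remdtheta} matches the paper's appeal to Proposition \ref{prop:lcb}.
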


\begin{proof}   A  twisted SKT metric  has  $\rho^B=0$  if and only if  
\[ a^2 - \frac{1}{2}a \, {\mbox {Tr}} (A) = -\Vert v \Vert^2,  \quad A^T v = 0.\]
If $a^2 - \frac{1}{2}aTr(A) = 0$, it follows that $v = 0$, so $v\in Im(A -a {\mbox{Id}})$. Conversely, if the conditions \eqref{cond1rhoB}  are satisfied for a metric $g \in \mathcal{G}$, it follows from the proof of Lemma \ref{pps:kahlerinv} that there exists $g^{h, u} \in \mathcal{G}$ such that $v^u = 0$. Then by Remark  \ref{remrhob=0}  we get $$(a^u)^2 - \frac{1}{2}a^u {\mbox {Tr}} (A^u) ) = c^2 \left (a^2 - \frac{1}{2}a {\mbox {Tr}} (A) \right)= 0 = \Vert v^u \Vert^2$$ and  so $g^{h, u}$ is twisted SKT and Bismut Ricci flat.

If $a^2 - \frac{1}{2}a Tr(A) < 0$,  by imposing that $A^Tv = 0$ and $v \neq 0$, we have   ${\mbox {span}} \langle v \rangle \cap \ker(A^T) \neq \{0\}$. Conversely, if the conditions \eqref{cond2rhoB} are satisfied for a metric $g \in \mathcal{G}$, it follows from Proposition 4.2 that there exists $g^{h,u} \in \mathcal{G}$ such that $(A^u)^T v^u = 0$. Let  $\tilde u := s u =  c s e_1 +  s w$,  with  $s > 0$.  Then $g^{h, \tilde{u}}$ satisfies
$$a^{\tilde u} = s a^u, v^{\tilde u} = s^2 v^u,A^{\tilde u} = s A^u.$$
As a consequence  $(A^{\tilde{u}})^T v^{\tilde u} = 0$  and 
$$(a^{\tilde u})^2 - \frac{1}{2} a^{\tilde u} {\mbox {Tr}} (A^{\tilde u}) = s^2 \left ( (a^u)^2 - \frac{1}{2} a^u {\mbox {Tr}}(A^u)\right ) < 0, \quad 
\Vert v^{\tilde u} \Vert^2 = s^4 \Vert v^u \Vert^2 > 0,$$
so that for 
$$s^2 = - \frac{(a^u)^2 - \frac{1}{2} a^u {\mbox {Tr}}(A^u)}{\Vert v^u \Vert^2} > 0,$$
we have
$$(a^{\tilde u})^2 - \frac{1}{2} a^{\tilde u} \, {\mbox {Tr}} (A^{\tilde u}) = - \Vert v^{\tilde u} \Vert^2.$$
It follows that for this $s$, $g^{h,\tilde{u}}$  is twisted SKT and Bismut-Ricci flat. The last part of the statement follows then from Proposition \ref{prop:lcb}.
\end{proof}

We can prove that a  twisted SKT Bismut-Ricci flat  metric on  an unimodular almost abelian Lie algebra  is  K\"ahler flat.
\begin{corol} 
Let $\mathfrak{g}$ be   a unimodular almost abelian Lie algebra admitting a twisted SKT structure $(J, g)$.   Then $(\frak g, J)$   has  a twisted SKT Bismut-Ricci flat metric  if and only if  $J$  is   bi-invariant, i.e. $[Jx, y] = J[x, y]$, for every $x, y \in \frak g$. Moreover, every twisted Bismut-Ricci flat metric has to be  K\"ahler flat.\end{corol}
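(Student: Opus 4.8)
The plan is to combine Proposition~\ref{pps:brflatinv} with the unimodularity hypothesis and the structure of twisted SKT metrics on almost abelian Lie algebras. First I would recall that $\mathfrak{g} = \mathfrak{g}(a,v,A)$ is unimodular if and only if $\operatorname{Tr}(\mathrm{ad}_{e_{2n}}|_{\mathfrak n}) = a + \operatorname{Tr}(A) = 0$; since by Remark~\ref{remdaptedbasis} the endomorphism $A$ depends only on $J$ up to a nonzero scalar, this condition is intrinsic to $(\mathfrak g, J)$. Now suppose $(\mathfrak g, J)$ admits a twisted SKT Bismut-Ricci flat metric $g^{h,u}$. By the characterization in Proposition~\ref{pps:brflatinv}, with the algebraic data $(a^u, v^u, A^u)$ associated to a $g^{h,u}$-orthonormal $J$-adapted basis, we must have $(a^u)^2 - \tfrac{1}{2} a^u \operatorname{Tr}(A^u) = -\|v^u\|^2$ and $(A^u)^T v^u = 0$. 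The key point is that unimodularity forces $a^u + \operatorname{Tr}(A^u) = 0$, i.e.\ $\operatorname{Tr}(A^u) = -a^u$, so the left-hand side of the first equation becomes $(a^u)^2 + \tfrac{1}{2}(a^u)^2 = \tfrac{3}{2}(a^u)^2 \geq 0$, which being equal to $-\|v^u\|^2 \leq 0$ forces $a^u = 0$ and $v^u = 0$. Then $\operatorname{Tr}(A^u) = 0$, and since by Theorem~\ref{thm:caractLCSKT} $A^u$ is $g^{h,u}$-normal with $\operatorname{Re}(\operatorname{Spec}(A^u)) \subset \{0,\mu\}$, the spectral theorem gives $A^u \in \mathfrak{so}(\mathfrak n_1)$. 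Hence $\mathrm{ad}_{e_{2n}'}|_{\mathfrak n}$ has the block form $\begin{pmatrix} 0 & 0 \\ 0 & A^u \end{pmatrix}$ with $A^u$ skew-symmetric, which translates exactly into $[Jx,y] = J[x,y]$ for all $x,y$, i.e.\ $J$ is bi-invariant; conversely, if $J$ is bi-invariant then $a = \operatorname{Tr}(A) = 0$ and $v = 0$ (as $[Je_1,e_1]=J[e_1,e_1]=0$ gives $a e_1 + v = 0$), so $A$ is skew and the metric $g = \sum (e^i)^2$ satisfies the first condition of \eqref{cond1rhoB} trivially with $v \in \operatorname{Im}(A - a\,\mathrm{Id})$, so a Bismut-Ricci flat twisted SKT metric exists.

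For the last assertion, I would argue that once $J$ is bi-invariant, \emph{every} twisted Bismut-Ricci flat metric $g^{h,u}$ necessarily has $a^u = 0$ and $v^u = 0$ by the computation above (the argument did not depend on which metric in $\mathcal{G}$ we started with — it is the Bismut-Ricci flatness plus unimodularity that forces $a^u = v^u = 0$). With $a^u = 0$, $v^u = 0$ and $A^u$ skew-symmetric, the metric $g^{h,u}$ is Kähler by the criterion $v^u = 0$ and $A^u \in \mathfrak{so}(\mathfrak n_1)$ recalled after Proposition~\ref{pps:antisym} (or \cite[Lemma 3.6]{fino2021generalized}). It then remains to check flatness: for a Kähler almost abelian metric with these data the Levi-Civita connection restricted to $\mathfrak n$ coincides with $\mathrm{ad}_{e_{2n}'}$, which is skew-symmetric, and a direct curvature computation (or invoking that $\rho^B = \rho^{LC}$ in the Kähler case, which vanishes, together with the fact that the full curvature of such a metric is determined by $a^u, v^u$ and the skew part of $A^u$) shows $R = 0$.

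The main obstacle I anticipate is the final flatness step: establishing that Kähler $+$ Bismut-Ricci flat $+$ almost abelian actually implies the full Riemann tensor vanishes, not merely the Ricci tensor. One clean route is to observe that for a Kähler metric the Bismut connection equals the Levi-Civita connection, so $\rho^B = \rho^{LC} = 0$ means the metric is Ricci-flat Kähler; then one uses the very rigid structure of almost abelian Lie groups — the sectional curvature formulas for left-invariant metrics show that with $a^u = v^u = 0$ and $A^u$ skew the group is flat (indeed the Lie algebra with $\mathfrak n$ abelian and $\mathrm{ad}_{e_{2n}'}$ skew-symmetric is a standard example of a flat solvmanifold, by Milnor's classification of flat left-invariant metrics \cite{Milnor}). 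I would spell this out using Milnor's criterion: a left-invariant metric is flat iff $\mathfrak g$ splits as an orthogonal sum $\mathfrak b \oplus \mathfrak u$ with $\mathfrak b$ abelian, $\mathfrak u$ abelian ideal, and $\mathrm{ad}_b$ skew-symmetric for $b \in \mathfrak b$ — which is precisely our situation with $\mathfrak b = \operatorname{span}\langle e_1, e_{2n}\rangle$ (note $[e_1, e_{2n}] = -a^u e_1 - v^u = 0$) and $\mathfrak u = \mathfrak n_1$.
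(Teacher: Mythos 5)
Your proof is correct and, for the core equivalence, it runs along the same lines as the paper's: unimodularity gives $\operatorname{Tr}(A^u)=-a^u$, so Bismut--Ricci flatness forces $a^u=0$ and $v^u=0$ (you obtain $\tfrac{3}{2}(a^u)^2+\Vert v^u\Vert^2=0$, which is in fact the correct coefficient; the paper's $\tfrac12 a^2$ comes from a sign slip but leads to the same conclusion), hence $\operatorname{Tr}(A^u)=0$ and, by normality together with $\operatorname{Re}\,{\rm Spec}(A^u)\subset\{0,\mu\}$, $A^u\in\mathfrak{so}(\mathfrak n_1)$ and the metric is K\"ahler; your converse likewise extracts $a=v=0$ from bi-invariance and feeds condition \eqref{cond1rhoB} into Proposition \ref{pps:brflatinv}, exactly as the paper does. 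Two remarks. First, bi-invariance in the operative sense used in the paper's proof is the commutation $[B,J]=0$, which (given $[A^u,J_1]=0$ from integrability) is equivalent to $a^ue_1+v^u=0$ alone; the skew-symmetry of $A^u$ is not part of that translation, so your phrase \lq\lq translates exactly into\rq\rq\ slightly overstates the dictionary, though this is at the same level of precision as the paper itself and does not affect the argument. Second, the only genuine divergence is the final flatness step: the paper simply cites \cite{AK} (a Ricci-flat homogeneous space is flat), whereas you verify Milnor's structural criterion \cite{Milnor} directly, with $\mathfrak b=\operatorname{span}\langle e_1,e_{2n}\rangle$ abelian (since $[e_1,e_{2n}]=-(a^ue_1+v^u)=0$), $\mathfrak u=\mathfrak n_1$ an abelian ideal, and $\operatorname{ad}_b$ skew-adjoint for $b\in\mathfrak b$. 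Your route is more elementary and self-contained --- it uses only the algebraic data $a^u=v^u=0$, $A^u\in\mathfrak{so}(\mathfrak n_1)$ and never needs the homogeneous Ricci-flat-implies-flat theorem --- at the cost of a slightly longer verification; the paper's appeal to \cite{AK} is shorter but rests on a deeper result.
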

\begin{proof}  By the    unimodularity of $\frak g$ it follows that  ${\mbox {Tr}}(B) = 0 = a +  {\mbox {Tr}} A)$,  for every $J$-adapted  basis. Moreover,  since $(J, g)$ is twisted SKT we have  $a^2 + \frac{1}{2}a {\mbox {Tr}}  (A) + \Vert v \Vert^2 = \frac{1}{2}a^2 + \Vert v \Vert^2 \geq 0$. 

The existence of a twisted SKT Bismut-Ricci flat metric $g'$  implies that $\frac{1}{2}a^2 + \Vert v \Vert^2 \leq 0$, so $a = 0$ and $v = 0$. As a consequence ${\mbox {Tr}} (A) = 0$ and  the metric $g'$  is K\"ahler.  Let $(e_i)$ be a $g'$-orthonormal $J$-adpted basis.  Note that $J$ is bi-invariant   if and only if  $[B, J] =0$. Therefore, since $[A, J_1] =0$,   it remains to check that $BJ$ and $JB$ coincide on $e_1$ and $e_{2n}$, which follows by $$Be_1 =ae_1 + v = 0 = JBe_{2n}.$$

Conversely, if  $J$ is bi-invariant and  $(e_i)$  is a $g$-orthonormal $J$-adapted basis, then $ae_1 + v = Be_1 = JBe_{2n} = 0$, so $a = 0$ and $v = 0$. It follows that $a^2 - \frac{1}{2}aTr(A) + \Vert v \Vert^2 = 0$ and $v \in Im(A-aId)$, so according to Proposition  \ref{pps:brflatinv} there exists a Bismut-Ricci flat metric $g^{h, u}$. As $\mathfrak{g}$ is unimodular, we have  $\frac{1}{2}{(a^u)}^2 + \Vert  v^u \Vert^2 = 0$, so $ a^u = 0$,
 $v^u= 0$ and finally ${\mbox {Tr}} (A^u) = 0$, so $A^u\in \mathfrak{so}(g^{h, u} \vert_{\mathfrak{n_1}})$ and $g^{h, u}$ is K\"ahler.  Moreover, the metric $g^{h, u}$ has to be  flat since  Ricci flatness implies flatness (see  \cite{AK}).
\end{proof}

We can use Proposition  \ref{pps:brflatinv}  to construct an example of a non-K\"ahler  non-unimodular  almost abelian Lie algebra  admitting  a metric which is both LCSKT and Bismut-Ricci flat.  

\begin{ex} 
Consider the $6$-dimensional  Lie algebra $\frak g :=\frak g(a, v, A)$, with $a = 1, v = e_4$, \[A = 
\begin{pmatrix}
r & -s & 0 & 0 \\
s & r & 0 & 0 \\
0 & 0 & 0 & 0 \\
0 & 0 & 0 & 0
\end{pmatrix}.
\]
and the complex structure $J$ defined by $Je_1 = e_6, Je_2 = e_3, Je_4 = e_5$. 
Then $a^2 - \frac{1}{2}a {\mbox {Tr}} (A) = 1 - r$, $A^Tv = 0$ and $v \in Im(A-a)$.
Therefore $(\frak g, J)$  has  a Bismut-Ricci flat  LCSKT metric  iff $r \geq 1$. Moreover, for $r \geq 1$,  the Lie algebra is non-K\"ahler since  $A$ is not skew-symmetric.
\end{ex}

\begin{ex}
The existence of a twisted SKT Bismut-Ricci flat metric  depends on the complex structure. Indeed, if we consider  the  Lie algebra $\frak g(a, v, A)$ with 
$a = 1, v = e_4,$ 
\[ A = 
\begin{pmatrix}
2 & 0 & 0 & 0 \\
0 & 2 & 0 & 0 \\
0 & 0 & 0 & 0 \\
0 & 0 & 0 & 0
\end{pmatrix} \]
and the  complex structure $Je_1 = e_6, Je_2 = e_3, Je_4 = e_5$. Then the metric $g = \sum_{i = 1}^6 (e^i)^2$ is twisted SKT and Bismut-Ricci flat. However,  if we consider  the new complex structure $J'$ defined by 
 $$J'(e_1 + e_4) = e_6, \,  J'e_2 = e_3,  \, J'e_4=e_5,$$  we have that  the metric,  such that the basis $(e_1 + e_4,  e_2, e_3, e_4, e_5, e_6)$ is orthonormal,  is LCSKT,  but   the Lie algebra does not admit any $J'$-Hermitian  twisted SKT Bismut-Ricci flat metric.
\end{ex}

\section*{Appendix}

\begin{table}[h]
   \centering

\begin{tabular}{|c|c|c|c|}
\hline
Conditions on $(a, v, A)$                        & Possible $\alpha$                                   & K\"ahler & SKT                   \\ \hline
$a = 0$, $\{\mu\} \subset Re \,  {\rm {Spec}}(A) \subset \{0, \mu\}$ with $\mu \neq 0$    & $- 2 \mu e^{2n} + s e^1,  \, \, s \in \R$                           & No     & No  \\ \hline
$a \neq 0$, $\{\mu\} \subset Re  \, {\rm {Spec}} (A) \subset \{0, \mu\}$ with $\mu \neq 0$ & $(-a- 2 \mu) e^{2n}$                      & No     & $\mu = - \frac{a}{2}$ \\ \hline
$a = 0$, $v = 0$, $Re \,  {\rm {Spec}} (A) = \{0\}$                  & $s e^{2n} + t e^1 + \beta, \, \,  s, t \in \R, \beta \in  ({\rm {Im}} (A)^\perp)^*$              & Yes    & Yes                   \\ \hline
$a = 0$, $v \neq 0$, $Re \, {\rm {Spec}} (A) = \{0\}$               & $s  e^{2n} +  \beta,  \,  \, s \in \R, \beta \in ( {\rm {Im}} (A)^\perp)^*$                       & No     & Yes                   \\ \hline
$a \neq 0$, $v = 0$, $Re \,  {\rm {Spec}}(A) = \{0\}$               & $s e^{2n} + t e^1, \,  \, s,t \in \R$                                & Yes    & Yes                   \\ \hline
$a \neq 0$, $v \neq 0$, $Re   \, {\rm {Spec}}(A) = \{0\}$            & $s e^{2n} + t  (\Vert v \Vert^2 e^1 - a v^\flat), \, \,  s,t \in \R$ & No     & Yes                   \\ \hline
\end{tabular}
    \caption{List of conditions on  $(a, v, A)$ and possible $\alpha$} \label{fig:alphas} 
  \end{table}

\newpage

\begin{table}[htbp]
\centering
\begin{tabular}{|c|c|c|c|c|}
\hline
Name                                                                                               & Structure equations                                                                & K\"ahler  & SKT                          & LCSKT \\ \hline
$\mathfrak{l_1}^{p, p}$                                                                            & $(f^{16},  pf^{26}, pf^{36}, pf^{46}, pf^{56}, 0)$                                 & $p = 0$ & $p \in \{0, - \frac{1}{2}\}$ & $p \neq - \frac{1}{2}$              \\ \hline
$\mathfrak{l}_{17}^p$                    & $(f^{16}, pf^{26}, pf^{36}, 0, 0, 0)$                                              & $p = 0$ & $p \in \{0, - \frac{1}{2}\}$ & $p\neq -\frac{1}{2}$                          \\ \hline
$\mathfrak{l}_{18}^q$                    & $(qf^{16}, f^{26}, f^{36}, 0, 0, 0)$                                               & \_      & $q = -2$                     & $q \neq -2$               \\ \hline
$\mathfrak{l}_2^{q, 1}$                                               & $(qf^{16}, f^{26}, f^{36}, f^{46}, f^{56}, 0)$                                     & \_      & $q = -2$                     & $q \neq -2$               \\ \hline
$\mathfrak{l}_{16}$                                                        & $(f^{16}, f^{26}+f^{16}, f^{36}, 0, 0, 0)$                                         & \_      & \_                           & $\checkmark$                             \\ \hline
$\mathfrak{l}_4^1$                                                         & $(f^{16}, f^{26}+ f^{16}, f^{36}, f^{46}, f^{56}, 0)$                              & \_      & \_                           & $\checkmark$                             \\  \hline
$\mathfrak{l}_8^{p, q, q}$                                                                         & $(pf^{16}, qf^{26}, qf^{36}, qf^{46} + f^{56}, -f^{46} + qf^{56}, 0)$              & $q=0$   & $q \in \{0, - \frac{p}{2}\}$ & $q\neq -\frac{p}{2}$               \\ \hline
$\mathfrak{l}_8^{p, q, 0}$                                                                         & $(pf^{16}, qf^{26}, qf^{36}, f^{56}, -f^{46}, 0)$                                  & $q=0$   & $q\in \{0, -\frac{p}{2}\}$   & $q \neq - \frac{p}{2}$                \\ \hline
$\mathfrak{l}_{19}^{p, s}$                                         & $(pf^{16}, 0, 0, sf^{46} + f^{56}, -f^{46} + sf^{56}, 0)$                          & $s = 0$ & $s \in \{0, -\frac{p}{2}\}$  & $s\neq -\frac{p}{2}$                 \\ \hline
$\mathfrak{l}_{11}^{p,q,q,s}$                                                                      & $(pf^{16}, qf^{26}+f^{36}, -f^{26}+qf^{36}, qf^{46}+sf^{56}, -sf^{46}+qf^{56}, 0)$ & $q = 0$ & $q\in \{0, -\frac{p}{2}\}$   & $q\neq -\frac{p}{2}$               \\ \hline
$\mathfrak{l}_{11}^{p, q, 0, s}$                                                                   & $(pf^{16}, qf^{26}+f^{36}, -f^{26}+qf^{36}, sf^{56}, -sf^{46}, 0)$                 & $q=0$   & $q\in \{0, -\frac{p}{2}\}$   & $q\neq -\frac{p}{2}$                 \\ \hline
$\mathfrak{l}_{11}^{p, 0, r, s}$                                                                   & $(pf^{16}, f^{36}, -f^{26}, rf^{46}+sf^{56}, -sf^{46}+rf^{56}, 0)$                 & $r=0$   & $r\in\{0, -\frac{p}{2}\}$    & $r\neq -\frac{p}{2}$                 \\ \hline
$\mathfrak{l}_{13}$                                                   & $(f^{16}, 0, 0, 0, 0, 0)$                                                          & $\checkmark$     & $\checkmark$                          & $\checkmark$                             \\ \hline
$\mathfrak{l}_{14}$                         & $(0, f^{26}, f^{36}, 0, 0)$                                                        & \_      & \_                           & $\checkmark$                            \\ 
$\mathfrak{l}_{15}^p$                                              & $(0, pf^{26}+f^{36}, -f^{26}+pf^{36}, 0, 0, 0)$                                    & $p=0$   & $p=0$                        & $\checkmark$                            \\ \hline
$\mathfrak{l}_{20}^1$                                                 & $(0, f^{26}, f^{36}, f^{46}, f^{56}, 0)$                                           & \_      & \_                           & $\checkmark$                            \\ \hline
$\mathfrak{l}_{22}^{q, 0}$                                         & $(0, f^{26}, f^{36}, f^{56}, -f^{46}, 0)$                                          & \_      & \_                           & $\checkmark$                            \\ \hline
$\mathfrak{l}_{22}^{q, 1}$                                         & $(0, f^{26}, f^{36}, f^{46}+f^{56}, -f^{46}+f^{56}, 0)$                            & \_      & \_                           & $\checkmark$                            \\ \hline
$\mathfrak{l}_{23}^p$                                              & $(f^{16}, 0, 0, pf^{46}+f^{56}, -f^{46}+pf^{56}, 0)$                               & $p=0$   & $p \in \{0, -\frac{1}{2}\}$  & $p\neq -\frac{1}{2}$                          \\ \hline
$\mathfrak{l}_{25}^{p,p,r}$                                     & $(0, pf^{26}+f^{36}, -f^{26}+pf^{36}, pf^{46}+rf^{56}, -rf^{46}+pf^{56}, 0)$       & $p=0$   & $p=0$                        & $\checkmark$                          \\ \hline
$\mathfrak{l}_{25}^{p,0,r}$ & $(0, pf^{26} + f^{36}, -f^{26} +pf^{36}, rf^{56}, -rf^{46}, 0)$                    & $p=0$   & $p=0$                        & $\checkmark$                          \\ \hline
\end{tabular}
 \caption{List of almost abelian  Lie algebras admitting a  twisted SKT structure} \label{fig:LCSKT6}
   
\end{table}

\begin{table}[htbp]
\centering
\begin{tabular}{|c|c|c|c|c|}
\hline
Name                                                                                               & Structure equations                                                                & Unimodular  & LCB                          & Bismut-Ricci flat \\ \hline
$\mathfrak{l_1}^{p, p}$                                                                            & $(f^{16},  pf^{26}, pf^{36}, pf^{46}, pf^{56}, 0)$                                 & $p = -\frac{1}{4}$ & $\checkmark$ & $p = \frac{1}{2}$              \\ \hline
$\mathfrak{l}_{17}^p$                    & $(f^{16}, pf^{26}, pf^{36}, 0, 0, 0)$                                              & $p = -\frac{1}{2}$ & $\checkmark$ & $p=1$                          \\ \hline
$\mathfrak{l}_{18}^q$                    & $(qf^{16}, f^{26}, f^{36}, 0, 0, 0)$                                               & $q = -2$      & $\checkmark$                     & $q \in \{0, 1\}$               \\  \hline
$\mathfrak{l}_2^{q, 1}$                                               & $(qf^{16}, f^{26}, f^{36}, f^{46}, f^{56}, 0)$                                     & $q = -4$      & $\checkmark$                     & $q \in \{0, 2\}$               \\ \hline
$\mathfrak{l}_{16}$                                                        & $(f^{16}, f^{26}+f^{16}, f^{36}, 0, 0, 0)$                                         & $q = -4$      & \_                           & \_                             \\ \hline
$\mathfrak{l}_4^1$                                                         & $(f^{16}, f^{26}+ f^{16}, f^{36}, f^{46}, f^{56}, 0)$                              & \_      & \_                           & \_                             \\  \hline
$\mathfrak{l}_8^{p, q, q}$                                                                         & $(pf^{16}, qf^{26}, qf^{36}, qf^{46} + f^{56}, -f^{46} + qf^{56}, 0)$              & $p + 4q = 0$   & $\checkmark$ & $p \in \{0, 2q\}$               \\ \hline
$\mathfrak{l}_8^{p, q, 0}$                                                                         & $(pf^{16}, qf^{26}, qf^{36}, f^{56}, -f^{46}, 0)$                                  & $p+2q=0$   & $\checkmark$   & $p\in \{0, q\}$                \\ \hline
$\mathfrak{l}_{19}^{p, s}$                                         & $(pf^{16}, 0, 0, sf^{46} + f^{56}, -f^{46} + sf^{56}, 0)$                          & $p+2s = 0$ & $\checkmark$  & $p\in \{0, s\}$                 \\ \hline
$\mathfrak{l}_{11}^{p,q,q,s}$                                                                      & $(pf^{16}, qf^{26}+f^{36}, -f^{26}+qf^{36}, qf^{46}+sf^{56}, -sf^{46}+qf^{56}, 0)$ & $p+4q = 0$ & $\checkmark$   & $p\in\{0, 2q\}$               \\ \hline
$\mathfrak{l}_{11}^{p, q, 0, s}$                                                                   & $(pf^{16}, qf^{26}+f^{36}, -f^{26}+qf^{36}, sf^{56}, -sf^{46}, 0)$                 & $p+2q=0$   & $\checkmark$   & $p\in\{0, q\}$                 \\ \hline
$\mathfrak{l}_{11}^{p, 0, r, s}$                                                                   & $(pf^{16}, f^{36}, -f^{26}, rf^{46}+sf^{56}, -sf^{46}+rf^{56}, 0)$                 & $p+2r=0$   & $\checkmark$    & $p\in\{0,r\}$                 \\ \hline
$\mathfrak{l}_{13}$                                                   & $(f^{16}, 0, 0, 0, 0, 0)$                                                          & \_     & $\checkmark$                          & \_                            \\ \hline
$\mathfrak{l}_{14}$                         & $(0, f^{26}, f^{36}, 0, 0)$                                                        & \_      & $\checkmark$                           & $\checkmark$                            \\ \hline
$\mathfrak{l}_{15}^p$                                              & $(0, pf^{26}+f^{36}, -f^{26}+pf^{36}, 0, 0, 0)$                                    & $p=0$   & $\checkmark$                        & $\checkmark$                            \\ \hline
$\mathfrak{l}_{20}^1$                                                 & $(0, f^{26}, f^{36}, f^{46}, f^{56}, 0)$                                           & \_      & $\checkmark$                           & $\checkmark$                            \\ \hline
$\mathfrak{l}_{22}^{q, 0}$                                         & $(0, f^{26}, f^{36}, f^{56}, -f^{46}, 0)$                                          & \_      & $\checkmark$                           & $\checkmark$                            \\ \hline
$\mathfrak{l}_{22}^{q, 1}$                                         & $(0, f^{26}, f^{36}, f^{46}+f^{56}, -f^{46}+f^{56}, 0)$                            & \_      & $\checkmark$                           & $\checkmark$                            \\ \hline
$\mathfrak{l}_{23}^p$                                              & $(f^{16}, 0, 0, pf^{46}+f^{56}, -f^{46}+pf^{56}, 0)$                               & $p=-\frac{1}{2}$   & $\checkmark$  & $p=1$                          \\ \hline
$\mathfrak{l}_{25}^{p,p,r}$                                     & $(0, pf^{26}+f^{36}, -f^{26}+pf^{36}, pf^{46}+rf^{56}, -rf^{46}+pf^{56}, 0)$       & $p=0$   & $\checkmark$                        & $p = 0$                          \\\hline
$\mathfrak{l}_{25}^{p,0,r}$ & $(0, pf^{26} + f^{36}, -f^{26} +pf^{36}, rf^{56}, -rf^{46}, 0)$                    & $p=0$   & $\checkmark$                        & $p = 0$                          \\ \hline
\end{tabular}
\caption{LCB and Bismut-Ricci flat metrics} \label{fig:LCSKT7}

\end{table}

\newpage

 \end{document}